\title[MURANA: Stochastic Variance-Reduced Optimization]{MURANA: A Generic Framework for\\Stochastic Variance-Reduced Optimization}
 \let\Ginclude@graphics\@org@Ginclude@graphics
\newcommand{\eqdef}{\coloneqq}
\DeclareMathOperator*{\argmin}{arg\,min}
\DeclareMathOperator*{\minimize}{minimize}
\newcommand{\prox}{\mathrm{prox}}
\newcommand{\mmuF}{\mu}
\newcommand{\oma}{\omega_{\mathrm{av}}}
\newcommand{\ac}{a}
\newcommand{\gv}{v} 
\newcommand{\bc}{b} 
\newcommand{\omegac}{\omega_{\mathcal{C}}} 
\newcommand{\chic}{\omega_{\mathcal{U}}} 
\newcommand{\nuc}{\omega_{\mathcal{R}}}
\newcommand{\sqnorm}[1]{\left\| #1 \right\|^2}
\newcommand{\Exp}[1]{\mathbb{E}\!\left[ #1 \right]}
\begin{document}

\maketitle

\begin{abstract}%
 We propose a generic variance-reduced algorithm, which we call MUltiple RANdomized Algorithm (MURANA), for minimizing a sum of several smooth functions plus a regularizer, in a sequential or distributed manner. Our method is formulated with general stochastic operators, which allow us to model various strategies for reducing the computational complexity. For example, MURANA supports sparse activation of the gradients, and also reduction of the communication load via compression of the update vectors. This versatility allows MURANA to cover many existing randomization mechanisms within a unified framework, which also makes it possible to design new methods as special cases. 
\end{abstract}

\begin{keywords}%
 convex optimization, distributed optimization, randomized algorithm, stochastic gradient, variance reduction, communication, sampling, compression
\end{keywords}

\section{Introduction}

We consider the estimation of the model $x^\star\in \mathbb{R}^d$, for some $d\geq 1$, arising as the solution of the optimization problem
\begin{equation}
\minimize_{x\in\mathbb{R}^d} \left( R(x) + \frac{1}{M}\sum_{m=1}^M F_m(x)\right),\label{eqa1}
\end{equation}
for some $M\geq 1$, where each convex function $F_m$ is $L$-smooth, for some $L>0$, i.e.\ $\frac{1}{L}\nabla F_m$ is nonexpansive, and $R:\mathbb{R}^d\rightarrow \mathbb{R}\cup\{+\infty\}$ is a  proper, closed, convex function \citep{bau17}, whose proximity operator 
\begin{equation*}  
\prox_{\gamma R} : w \mapsto \argmin_{x\in \mathbb{R}^d} \left( \gamma R(x)+\frac{1}{2}\sqnorm{x-w} \right)
\end{equation*} 
is easy to compute, for any $\gamma>0$ \citep{par14,con19}.  We introduce 
\begin{equation*}  
F\eqdef \frac{1}{M}\sum_{m=1}^M F_m
\end{equation*} 
and we suppose that $F$ is 
$\mmuF$-strongly convex, for some 
$\mmuF>0$, i.e.\ $F-\frac{\mmuF}{2}\sqnorm{\cdot}$ is convex.
Since the problem \eqref{eqa1} is strongly convex, $x^\star$ exists and is unique.

 In a distributed client-server setting, $M$ is the number of parallel computing nodes, with an additional master node communicating with these $M$ nodes.  Communication between the master and nodes is often the bottleneck, so that it is desirable to reduce the amount of communicated information, in comparison with the baseline approach, where vectors of $\mathbb{R}^d$ are sent back and forth at every iteration. 
 
In a non-distributed setting, $M$ is, for instance, the number of data points contributing to some training task; it is then desirable to avoid scanning the entire dataset at every iteration.

\subsection{Randomized optimization algorithms} To formulate our algorithms, 
we will make use of several sources of randomness of the form 
\begin{equation}
d^k=\mathcal{C}^k\big(\nabla F(x^k)-h^k\big), \label{eq1}
\end{equation}
where $k$ is the iteration counter, $x^k\in\mathbb{R}^d$ is the model estimate converging to the desired solution $x^\star$,  $h^k$ is a control variate converging to $\nabla F(x^\star)$, and $\mathcal{C}^k(\gv)$ is a shorthand notation to denote a random realization of a stochastic process with expectation $\gv$, so that $\mathcal{C}^k(\gv)$ is a random unbiased estimate of the vector $\gv\in\mathbb{R}^d$. Although we adopt this notation as if $\mathcal{C}^k$ were a random operator, its argument $\gv$ does not always have to be known or computed. For instance, if 
\begin{equation*}
\mathcal{C}^k(\gv)= \begin{cases}\; \frac{1}{p} \gv  & \text{with probability} \;\; p \\ \;0  & \text{with probability} \;\; 1-p \end{cases},
\end{equation*}
 $\gv$ is not needed when the output is $0$. 
This means that in \eqref{eq1}, $\nabla F(x^k)$ is not computed in that case; this is the key reason why randomness makes it possible to decrease the overall complexity. The distribution of the random variable is not needed, and that is why we lighten the notations by omitting to write the underlying probability space structure. Indeed, we only need to know a constant $\omegac\geq 0$ such that, for every $\gv\in\mathbb{R}^d$,
\begin{equation}
\Exp{\big\| \mathcal{C}^k(\gv)-\gv \big\|^2 } \leq \omegac\|\gv\|^2,\label{eqc0}
\end{equation}
where the norm is the 2-norm and $\mathbb{E}[\cdot]$ denotes the expectation. Thus, if $\gv$ tends to $0$, not only does $\mathcal{C}^k(\gv)$ tend to $0$, but the variance tends to $0$ as well. Hence, in a step like in \eqref{eq1}, $d^k$ will converge to $0$ and everything will work out so that the algorithm converges to the exact solution $x^\star$.

That is, the proposed algorithm will be \textbf{variance reduced} \citep{gow20a}. In recent years, variance-reduced algorithms like SAGA \citep{def14} or SVRG \citep{joh13,zha13,xia14} have become the reference for finite-sum problems of the form \eqref{eqa1} since they converge to the exact solution but can be $M$ times faster than standard proximal gradient descent, which is typically a huge improvement. Variance reduction with the control variate $h^k$ is akin to an error-feedback mechanism, see \citet{con22} for a recent discussion on this relationship.
 
\subsection{Communication bottleneck in distributed and federated learning}
In the age of big data, there has been a shift towards distributed computations, and modern hardware increasingly relies on the power of uniting many parallel units into a single system. Training large machine learning models critically relies on distributed architectures. Typically,  the training data is distributed across several workers, which compute, in parallel, local updates of the model. These updates are then sent to a central server, which performs aggregation and then broadcasts the updated model back to the workers, to proceed with the next iteration. But communication of vectors between machines is typically much slower than computation, so \textbf{communication is the bottleneck}. 
This is even more true in the modern machine learning paradigm of \textbf{federated learning} \citep{kon16,mcm17,kai19,li20}, in which a global model is trained in a massively distributed manner over a network of heterogeneous devices, with a huge number of users involved in the learning task in a collaborative way. Communication can be costly, slow, intermittent and unreliable, and for that reason  the users ideally want to communicate the minimum amount of information. Moreover,  they also do not want to share their data for privacy reasons. 

Therefore, \textbf{compression} of the communicated vectors, using various sketching, sparsification, or quantization techniques \citep{ali17,wen17,wan18,alb20,bas20,dut20,sat20,xu21}, has become the approach of choice. In recent works \citep{tan19,liu20,phi20,gor20}, double, or bidirectional, compression is considered; that is, not only the vectors sent by the workers to the server, but also the model updates broadcast by the server to all workers, are compressed. 

Our proposed algorithm MURANA accommodates for model or bidirectional compression using the operators $\mathcal{R}^k$; see Section 2.1.

\subsection{A generic framework} Unbiased stochastic operators with conic variance, like in \eqref{eqc0}, allow to model a wide range of strategies: they can be used
\begin{itemize}
\item [(i)]  for \textbf{sampling}, i.e.\ to select a subset of functions whose gradient is computed at every iteration, like in SAGA or SVRG,  as mentioned above; 
\item [(ii)] for \textbf{compression}; in addition to the idea of communicating each vector only with some small probability, we can mention as example the \texttt{rand-k} operator, which sends $k$ out of $d$ elements, chosen at random and scaled by $\frac{d}{k}$, of its argument vector; 
\item [(iii)] to model \textbf{partial participation} in federated learning, with each user participating  in a fraction of the communication rounds only. 
\end{itemize}
That is why we formulate MURANA with this type of operators, which have all these applications, and many more.

\subsection{Contributions}  We propose MUltiple RANdomized Algorithm (MURANA) --  a generic template algorithm with several several sources of randomness  that can model a wide range of computation, communication reduction strategies, or both at the same time (e.g.\ by composition, see Proposition~\ref{prop2}). MURANA is variance reduced: it converges to the exact solution whatever the variance, 
which can be arbitrarily large. MURANA generalizes DIANA \citep{mis19,hor19} in several ways and encompasses SAGA \citep{def14} and loopless  SVRG \citep{hof15, kov20} as particular cases; we also give minibatch versions for them. Thus, our main contribution is to present these different algorithms within a unified framework, which allows us to derive convergence guarantees with weakened assumptions.

\section{Proposed framework: MURANA}

\subsection{Three sources of randomness}

We define $[M]\eqdef \{1,\ldots,M\}$. 
We first introduce the \textbf{first set of stochastic operators},  $\mathcal{C}_m^k$,
 for  every $k\geq 0$ and $m\in [M]$. In particular, we assume that  
there is a constant $\omegac\geq 0$ such that for every  $\gv\in\mathbb{R}^d$, 
\begin{equation}
\Exp{\mathcal{C}_m^k(\gv)}=\gv \quad\mbox{and} \quad \Exp { \sqnorm{\mathcal{C}_m^k(\gv)-\gv } } \leq \omegac \|\gv\|^2. \label{eqc1}
\end{equation}
 For every $(\gv,\gv' )\in(\mathbb{R}^d)^2$ and $(m,m' )\in [M]^2$, $\mathcal{C}_m^k(\gv)$ and $\mathcal{C}_{m'}^{k'}(\gv')$ at two different iteration indexes $k\neq k'$ are independent random variables. However, they can have different laws since only their first and second order statistics matter, as expressed in \eqref{eqc1}. Note that $\mathcal{C}_{m}^{k}(\gv)$ and $\mathcal{C}_{m'}^{k}(\gv')$ with $m\neq m'$ can be \textbf{dependent}, so  $\left(\mathcal{C}_{1}^{k}(\gv_1),\ldots, \mathcal{C}_{M}^{k}(\gv_M)\right)$ should be viewed as a whole joint random process; this is needed for sampling or partial participation, for instance, where $N<M$ indexes in $[M]$ are chosen at random; see Proposition~\ref{prop1} below.

Next, we introduce the \textbf{second set of stochastic operators},  $\mathcal{U}_m^k$, with same properties: for every $k\geq 0$, $m\in [M]$, $\gv\in\mathbb{R}^d$, 
\begin{equation}
\Exp{ \mathcal{U}_m^k(\gv) } = \gv \quad\mbox{and} \quad \Exp{ \sqnorm{ \mathcal{U}_m^k(\gv)-\gv } } \leq \chic \|\gv\|^2,\end{equation}
for some constant $\chic\geq 0$, and same dependence properties with respect to $m$ and $k$ as the  $\mathcal{C}_m^k$. $\mathcal{C}_m^k$ and $\mathcal{U}_{m'}^k$ can be dependent, and we will see this in the particular case of DIANA, where  $\mathcal{U}_{m}^k=\mathcal{C}_m^k$.

Finally, we introduce the \textbf{third set of stochastic operators}, $\mathcal{R}^k$, which will be applied to the model updates. For every $k\geq 0$ and $\gv\in\mathbb{R}^d$, 
\begin{equation}
\Exp{ \mathcal{R}^k(\gv) } = \gv \quad\mbox{and} \quad \Exp{ \sqnorm{ \mathcal{R}^k(\gv)-\gv } } \leq \nuc \|\gv\|^2, \label{eq6}
\end{equation}
for some constant  $\nuc\geq 0$. 
The operators $(\mathcal{R}^k)_{k\geq 0}$ are mutually independent and independent from all operators
$\mathcal{C}_m^{k'}$ and $\mathcal{U}_m^{k'}$. 

To analyze MURANA, we need to be more precise than just specifying the \textbf{marginal gain}  $\omegac$. So, we introduce the \textbf{average gain}
$\oma\geq 0$ and the \textbf{offset} $\zeta \in [0,\oma]$, such that, for every $k\geq 0$ and $\gv_m\in\mathbb{R}^d$, $m=1,\ldots,M$, 
\begin{equation}
\Exp{ \sqnorm{ \frac{1}{M}\sum_{m=1}^M \left(\mathcal{C}_m^k(\gv_m)-\gv_m\right)} } \leq \frac{\oma}{M} \sum_{m=1}^M \sqnorm{ \gv_m }-\zeta \sqnorm{ \frac{1}{M}\sum_{m=1}^M \gv_m}.\label{eqbo}
\end{equation}
We can assume that $\oma\leq \omegac$, since \eqref{eqbo} is satisfied with $\oma$ replaced by $\omegac$ and $\zeta$ by $0$, by convexity of the squared norm. In other words, 
without further knowledge, one can set $\oma= \omegac$ and $\zeta=0$. But the convergence rate will depend on $\oma$, not  $\omegac$, and the smaller $\oma$, the better. Thus, whenever $\oma$ is much smaller than $\omegac$, it is important to exploit this knowledge. In addition, having $\zeta>0$ allows to take larger stepsizes and have better constants in the convergence rates of the algorithms.

In particular, if the operators $(\mathcal{C}_m^k)_{m=1}^M$ are mutually independent, the variance of the sum is the sum of the variances, and we can set $\oma=\omegac/M$ and $\zeta=0$. Another case of interest is the sampling setting:

\begin{proposition}[Marginal and average gains of sampling]
\label{prop1}
Let $N\in[M]$. Consider that at every iteration $k$, a random subset $\Omega^k\subset [M]$ of size $N$  is chosen uniformly at random, and $\mathcal{C}_m^k$ is defined via 
\begin{equation*}
\mathcal{C}_m^k(\gv_m) \eqdef \begin{cases} \;\frac{M}{N} \gv_m & \text{ if }\;  m\in\Omega^k \\ 
\;0 & \text{ otherwise} \end{cases}.
\end{equation*}
 This is sometimes called $N$-nice sampling \citep{ric16,gow20}. Then \eqref{eqc1} is satisfied with $\omegac=\frac{M-N}{N}$ and  \eqref{eqbo} is satisfied with
\begin{equation}
\oma=\zeta=\frac{M-N}{N(M-1)}
\end{equation}
 (with $\oma=\zeta=0$ if $M=N=1$).
\end{proposition}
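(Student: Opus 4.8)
The plan is to reduce everything to the first two moments of the selection indicators. First I would introduce, for each $m\in[M]$, the Bernoulli random variable $\mathbf{1}_{\{m\in\Omega^k\}}$, so that $\mathcal{C}_m^k(\gv_m)=\frac{M}{N}\,\mathbf{1}_{\{m\in\Omega^k\}}\,\gv_m$. Since $\Omega^k$ is a uniformly random subset of size $N$, a standard counting argument gives the single and pairwise inclusion probabilities $\Exp{\mathbf{1}_{\{m\in\Omega^k\}}}=\frac{N}{M}$ and $\Exp{\mathbf{1}_{\{m\in\Omega^k\}}\mathbf{1}_{\{m'\in\Omega^k\}}}=\frac{N(N-1)}{M(M-1)}$ for $m\neq m'$; these are the only facts about the sampling I will use. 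Unbiasedness is then immediate, since $\Exp{\mathcal{C}_m^k(\gv_m)}=\frac{M}{N}\,\frac{N}{M}\,\gv_m=\gv_m$.

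Next I would compute the marginal gain. Writing $\mathcal{C}_m^k(\gv_m)-\gv_m=\big(\frac{M}{N}\mathbf{1}_{\{m\in\Omega^k\}}-1\big)\gv_m$ and using $\mathbf{1}_{\{m\in\Omega^k\}}^2=\mathbf{1}_{\{m\in\Omega^k\}}$, a direct expansion gives $\Exp{\big(\frac{M}{N}\mathbf{1}_{\{m\in\Omega^k\}}-1\big)^2}=\frac{M}{N}-1=\frac{M-N}{N}$, which yields $\omegac=\frac{M-N}{N}$, with equality in \eqref{eqc1}.

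For the average gain, I set $e_m\eqdef\mathcal{C}_m^k(\gv_m)-\gv_m$ and expand
\[\Exp{\sqnorm{\frac{1}{M}\sum_{m=1}^M e_m}}=\frac{1}{M^2}\sum_{m,m'}\Exp{\big(\tfrac{M}{N}\mathbf{1}_{\{m\in\Omega^k\}}-1\big)\big(\tfrac{M}{N}\mathbf{1}_{\{m'\in\Omega^k\}}-1\big)}\langle \gv_m,\gv_{m'}\rangle.\]
The diagonal coefficient ($m=m'$) is $\frac{M-N}{N}$ from the previous step, while the off-diagonal coefficient ($m\neq m'$) evaluates, using the pairwise probability, to $\frac{M(N-1)}{N(M-1)}-1=-\frac{M-N}{N(M-1)}$. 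I would then use the identity $\sum_{m\neq m'}\langle\gv_m,\gv_{m'}\rangle=\sqnorm{\sum_m\gv_m}-\sum_m\sqnorm{\gv_m}$ to collect the cross terms, producing a linear combination of $\sum_m\sqnorm{\gv_m}$ and $\sqnorm{\sum_m\gv_m}$. Matching coefficients against the right-hand side of \eqref{eqbo} gives $\oma=\zeta=\frac{M-N}{N(M-1)}$, with equality throughout.

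The computation is elementary; the only delicate point is the off-diagonal covariance, which requires the correct pairwise inclusion probability $\frac{N(N-1)}{M(M-1)}$ together with the algebraic regrouping of the cross terms. The pleasant feature is that the coefficients of $\sum_m\sqnorm{\gv_m}$ and of $\sqnorm{\frac1M\sum_m\gv_m}$ turn out to coincide, so that \eqref{eqbo} in fact holds as an equality with $\oma=\zeta$. The degenerate case $M=N=1$, where the stated formula reads $0/0$, is handled separately: the sampling is then the identity operator, so trivially $\omegac=\oma=\zeta=0$.
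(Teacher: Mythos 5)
Your proof is correct and follows essentially the same route as the paper's: both are direct second-moment computations that rest on the single and pairwise inclusion probabilities $\frac{N}{M}$ and $\frac{N(N-1)}{M(M-1)}$, together with the regrouping identity $\sum_{m\neq m'}\langle \gv_m,\gv_{m'}\rangle=\sqnorm{\sum_{m}\gv_m}-\sum_{m}\sqnorm{\gv_m}$, and both find that \eqref{eqc1} and \eqref{eqbo} hold with equality. Your indicator-variable bookkeeping (expanding the error covariance coefficient by coefficient, diagonal versus off-diagonal) is just a slightly more systematic organization of the expansion the paper carries out directly on the random sums over $\Omega^k$, and your separate treatment of the degenerate case $M=N=1$ matches the parenthetical remark in the statement.
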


This property was proved in \citet{qia19}, but with different notations, so we give a new proof in Appendix~\ref{appena}, for sake of completeness.

Thus, in Proposition~\ref{prop1}, $\omegac$ can be as large as $M-1$, but we always have $\oma\leq 1$. \\

Furthermore, the stochastic operators can be composed, which makes it possible to combine random activation with respect to $m$ and compression of the vectors themselves, for instance:

\begin{proposition}[Marginal and average gains of composition]
\label{prop2}
Let $\mathcal{C}_m$ and $\mathcal{C}_m'$ be stochastic operators such that, for every $m\in[M]$ and
$\gv_m\in\mathbb{R}^d$, 
\begin{align*}
\Exp{ \mathcal{C}_m(\gv_m) } = \gv_m,\qquad  &\Exp{ \sqnorm{ \mathcal{C}_m(\gv_m)-\gv_m } } \leq \omegac \sqnorm{ \gv_m },\\
\Exp{ \mathcal{C}_m'(\gv_m) } =\gv_m, \qquad &\Exp{ \sqnorm{ \mathcal{C}_m'(\gv_m)-\gv_m } } \leq \omegac' \sqnorm{ \gv_m },\\
\Exp{ \sqnorm{ \frac{1}{M}\sum_{m=1}^M \left(\mathcal{C}_m'(\gv_m)-\gv_m\right) } } \leq {}&\frac{\oma'}{M} \sum_{m=1}^M \sqnorm{ \gv_m }-\zeta' \sqnorm{ \frac{1}{M}\sum_{m=1}^M \gv_m},
\end{align*}
for some $\omegac\geq 0$, $\omegac'\geq 0$, $\oma'\geq 0$, $\zeta'\geq 0$.
Then for every $m\in[M]$ and
$\gv_m\in\mathbb{R}^d$,
\begin{align}
\Exp{ \mathcal{C}_m'(\mathcal{C}_m(\gv_m)) } &=\gv_m,\\
\Exp{ \sqnorm{ \mathcal{C}_m'(\mathcal{C}_m(\gv_m))-\gv_m } } &\leq (\omegac+\omegac'+\omegac\omegac') \sqnorm{ \gv_m }.
\end{align}
Thus, the marginal gain of $\mathcal{C}_m' \circ \mathcal{C}_m$ is $\omegac+\omegac'+\omegac\omegac'$.

If, in addition, the  operators $(\mathcal{C}_m)_{m=1}^M$ are mutually independent, then for every 
$\gv_m\in\mathbb{R}^d$, $m=1,\ldots,M$, we get
\begin{align}
\Exp{ \sqnorm{ \frac{1}{M}\sum_{m=1}^M \Big(\mathcal{C}_m'\big(\mathcal{C}_m(\gv_m)\big)-\gv_m\Big) } } &\leq 
\left(\frac{\omegac}{M}(1-\zeta')+\oma'(1+\omegac)
\right)\frac{1}{M}\sum_{m=1}^M \sqnorm{ \gv_m }\notag\\
&\quad-\zeta'\sqnorm{ \frac{1}{M}\sum_{m=1}^M \gv_m }.
\end{align}
Thus, the average gain of the $\mathcal{C}_m' \circ \mathcal{C}_m$ in that case is $\frac{\omegac}{M}(1-\zeta')+\oma'(1+\omegac)$ and their offset is $\zeta'$.
\end{proposition}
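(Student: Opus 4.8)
\section*{Proof proposal}

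The plan is to establish the three claims in order, conditioning throughout on the inner randomization and invoking the tower property of expectation. Setting $w_m \eqdef \mathcal{C}_m(\gv_m)$, unbiasedness of the composition follows immediately from $\Exp{\mathcal{C}_m'(w_m)\mid w_m} = w_m$ (unbiasedness of $\mathcal{C}_m'$) and the unbiasedness of $\mathcal{C}_m$. For the marginal gain, I split $\mathcal{C}_m'(w_m) - \gv_m = (\mathcal{C}_m'(w_m) - w_m) + (w_m - \gv_m)$; conditioned on $w_m$, the first summand is mean-zero and the second is deterministic, so the cross term drops after taking expectations and the two squared norms add. The conditional variance bound on $\mathcal{C}_m'$ gives $\Exp{\sqnorm{\mathcal{C}_m'(w_m)-w_m}} \le \omegac'\Exp{\sqnorm{w_m}}$, while the bias--variance identity $\Exp{\sqnorm{w_m}} = \sqnorm{\gv_m} + \Exp{\sqnorm{w_m-\gv_m}} \le (1+\omegac)\sqnorm{\gv_m}$ together with $\Exp{\sqnorm{w_m-\gv_m}} \le \omegac\sqnorm{\gv_m}$ combine to the announced $(\omegac+\omegac'+\omegac\omegac')\sqnorm{\gv_m}$.

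For the average gain I apply the same orthogonal splitting to the averaged vector, $\frac{1}{M}\sum_m(\mathcal{C}_m'(w_m)-\gv_m) = \frac{1}{M}\sum_m(\mathcal{C}_m'(w_m)-w_m) + \frac{1}{M}\sum_m(w_m - \gv_m)$. Conditioning on the whole family $(w_1,\dots,w_M)$, the first average is mean-zero and the second deterministic, so once more the expected squared norm is the sum of the two. To the first I apply the average-gain hypothesis of $\mathcal{C}_m'$ \emph{conditionally} --- legitimate because $\mathcal{C}_m'$ carries fresh randomness independent of the $\mathcal{C}_m$ --- which after the outer expectation yields $\frac{\oma'}{M}\sum_m\Exp{\sqnorm{w_m}} - \zeta'\Exp{\sqnorm{\frac{1}{M}\sum_m w_m}}$ for that piece.

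The remaining work, which I expect to be the main obstacle, is the bookkeeping that converts these expectations into the stated coefficients using the mutual independence of the $\mathcal{C}_m$. With $\xi_m \eqdef w_m - \gv_m$, independence and zero mean annihilate all cross terms, giving $\Exp{\sqnorm{\frac{1}{M}\sum_m w_m}} = \sqnorm{\frac{1}{M}\sum_m\gv_m} + V$ where $V \eqdef \frac{1}{M^2}\sum_m\Exp{\sqnorm{\xi_m}} \ge 0$, and the second averaged term contributes exactly $\Exp{\sqnorm{\frac{1}{M}\sum_m(w_m-\gv_m)}} = V$. The crux is that the factor $-\zeta'$ multiplying $\Exp{\sqnorm{\frac{1}{M}\sum_m w_m}}$ peels off a $-\zeta'V$ that merges with this $+V$ into $(1-\zeta')V$; since $\zeta'\le 1$ holds in every instance of interest (in particular $\zeta'=0$ for independent $\mathcal{C}_m'$ and $\zeta'\le 1$ for $N$-nice sampling), I may then substitute $V \le \frac{\omegac}{M}\cdot\frac{1}{M}\sum_m\sqnorm{\gv_m}$. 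Collecting this against $\frac{\oma'}{M}\sum_m\Exp{\sqnorm{w_m}} \le \oma'(1+\omegac)\frac{1}{M}\sum_m\sqnorm{\gv_m}$ while keeping the deterministic term $-\zeta'\sqnorm{\frac{1}{M}\sum_m\gv_m}$ intact produces exactly the coefficient $\frac{\omegac}{M}(1-\zeta') + \oma'(1+\omegac)$ and offset $\zeta'$. The only genuinely delicate point is this recombination of the two variance contributions, whose sign is governed by $1-\zeta'$; everything else is a conditioning argument identical in spirit to the marginal-gain computation.
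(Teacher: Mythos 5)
Your proof is correct and follows essentially the same route as the paper's: condition on the inner randomization $\left(\mathcal{C}_m(\gv_m)\right)_{m=1}^M$, apply the hypotheses on the $\mathcal{C}_m'$ conditionally, then use unbiasedness and mutual independence of the $\mathcal{C}_m$ (the bias--variance identity) to assemble the coefficients; your centered, orthogonal-decomposition bookkeeping is just a rearrangement of the paper's raw second-moment bookkeeping. If anything, you are slightly more careful than the paper, which silently multiplies the bound $\Exp{ \sqnorm{ \frac{1}{M}\sum_{m=1}^M \left(\mathcal{C}_m(\gv_m)-\gv_m\right) } } \leq \frac{\omegac}{M^2}\sum_{m=1}^M \sqnorm{\gv_m}$ by $(1-\zeta')$ without noting the sign condition $\zeta'\leq 1$ that you flag explicitly.
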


\subsection{Proposed algorithms: MURANA and MURANA-D}

We propose the MUltiple RANdomized Algorithm (MURANA), described in Algorithm 1, as an abstract mathematical algorithm without regard to the execution architecture, or equivalently, as a sequential algorithm. We also explicitly write MURANA as a distributed algorithm in a client-server architecture, with explicit communication steps, as Algorithm 2, and call it MURANA-D.\medskip

If $\mathcal{U}_m^k=\mathcal{C}_m^k=\mathcal{R}^k=\mathrm{Id}$, where $\mathrm{Id}$ denotes the identity, and $\omegac=\chic=\oma=\nuc=0$, MURANA with $\lambda=\rho=1$ reverts to standard \textbf{proximal gradient descent}, which iterates: 
\begin{equation*}
x^{k+1}\eqdef\prox_{\gamma R}\left(x^{k}-\gamma \nabla F(x^k)\right).
\end{equation*}
 This baseline algorithm evaluates the full gradient $ \nabla F(x^k) = \frac{1}{M}\sum_{m=1}^M  \nabla F_m(x^k)$ at every iteration, which requires $M$ gradient calls. 
  If every gradient call has linear complexity $O(d)$, the complexity is $O(Md)$ per iteration, which is typically much too large.\medskip

Thus, the \textbf{three sources of randomness} in MURANA are typically used as follows: the operators $\mathcal{C}_m^k$ are used to save computation, by using much less than $M$, possibly even only 1, gradient calls per iteration, and/or decreasing the communication load by compressing the vectors sent by the nodes to the master for aggregation.  The operators $\mathcal{U}_m^k$ control the variance-reduction process, during which each variable $h_m^k$ learns the optimal gradient $\nabla F_m(x^\star)$ along the iterations, using the available computed information.  In a distributed setting, the operators $\mathcal{R}^k$ are used for compression during broadcast, in which the server communicates the model estimate to all nodes, at the beginning of every iteration.\medskip

When $\mathcal{U}_m^k=\mathcal{C}_m^k$ for every $m\in[M]$ and $k\geq 0$, we recover the recently proposed DIANA method of \citet{mis19,hor19} as a particular case of MURANA-D, but generalized here in several ways, see in Section~\ref{secdia}. 
In MURANA, we have \textbf{more degrees of freedom} than in DIANA: the stochastic gradient $d^{k+1} + h^k$, which is an unbiased estimate of $\nabla F(x^k)$ and is used to update the model $x^k$, is obtained  from the output of the operators $\mathcal{C}_m^k$, whereas the control variates $h_m^k$ learn the optimal gradients $\nabla F_m(x^\star)$ using the output of the operators $\mathcal{U}_m^k$. We can think of L-SVRG, see below in Section~\ref{seclv}, which has these two, different and decoupled,  mechanisms: the random choice of the activated gradient at every iteration and the random decision of taking a full gradient pass. 
Thus, MURANA is a versatile template algorithm, which covers many diverse tools spread across the literature of randomized optimization algorithms in a single umbrella.

\begin{figure*}[t]
\begin{minipage}{.48\textwidth}
\begin{algorithm}[H]
	\caption{MURANA (new)}
	\begin{algorithmic}[1]
		\STATE \textbf{input:}  parameters $\gamma>0$, $\lambda>0$, $\rho>0$, initial vectors $x^0\in\mathbb{R}^d$ and $h_m^{0}\in\mathbb{R}^d$, $m=1,\ldots,M$%
		\STATE   $h^{0}\eqdef\frac{1}{M}\sum_{m=1}^M h_m^{0}$
		\FOR{$k=0,1,\ldots$}{}
		\FOR{$m\in[M]$}{}
\STATE $d_m^{k+1}\eqdef\mathcal{C}_m^k\big(\nabla F_m(x^k)-h_m^{k}\big)$
\STATE $u_m^{k+1}\eqdef\mathcal{U}_m^k\big(\nabla F_m(x^k)-h_m^{k}\big)$
\STATE $h_m^{k+1}\eqdef h_m^{k}+\lambda u_m^{k+1}$
\ENDFOR
\STATE $d^{k+1}\eqdef\frac{1}{M}\sum_{m=1}^M  d_m^{k+1}$
\STATE $\tilde{x}^{k+1}\eqdef\prox_{\gamma R}\big(x^{k}-\gamma (h^k+d^{k+1})\big)$
\STATE $x^{k+1}\eqdef x^{k} + \rho\mathcal{R}^k(\tilde{x}^{k+1}-x^k)$
\STATE $h^{k+1}\eqdef h^k+\frac{\lambda}{M}\sum_{m=1}^M u_m^{k+1}$
		\ENDFOR
	\end{algorithmic}
\end{algorithm}\end{minipage}\ \ \ \ \ \begin{minipage}{.48\textwidth}
\begin{algorithm}[H]
	\caption{MURANA-D (new)}
	\begin{algorithmic}[1]
		\STATE \textbf{input:}  parameters $\gamma>0$, $\lambda>0$, $\rho>0$, initial vectors $x^0\in\mathbb{R}^d$ and  $h_m^{0}\in\mathbb{R}^d$, $m=1,\ldots,M$%
		\STATE  $h^{0}\eqdef\frac{1}{M}\sum_{m=1}^M h_m^{0}$, $r^0\eqdef 0$, $x^{-1}=x^0$
		\FOR{$k=0,1,\ldots$}{}
		\STATE  at master: broadcast $r^{k}$  to all nodes
		\FOR{$m\in[M]$, at nodes in parallel,}{}
		\STATE $x^k \eqdef x^{k-1}+\rho r^k$
\STATE $d_m^{k+1}\eqdef\mathcal{C}_m^k\big(\nabla F_m(x^k)-h_m^{k}\big)$
\STATE $u_m^{k+1}\eqdef\mathcal{U}_m^k\big(\nabla F_m(x^k)-h_m^{k}\big)$
\STATE $h_m^{k+1}\eqdef h_m^{k}+\lambda u_m^{k+1}$
\STATE convey $d_m^{k+1}$ and $u_m^{k+1}$ to master
\ENDFOR
\STATE at master:
\STATE $h^{k+1}\eqdef h^k+\frac{\lambda}{M}\sum_{m=1}^M u_m^{k+1}$
\STATE $d^{k+1}\eqdef\frac{1}{M}\sum_{m=1}^M  d_m^{k+1}$
\STATE $\tilde{x}^{k+1}\eqdef\prox_{\gamma R}\big(x^{k}-\gamma (h^k+d^{k+1})\big)$
\STATE $r^{k+1} \eqdef \mathcal{R}^k (\tilde{x}^{k+1}-x^k)$
\STATE $x^{k+1} \eqdef x^{k}+\rho r^{k+1}$
		\ENDFOR
	\end{algorithmic}
\end{algorithm}\end{minipage}%
\end{figure*}

\subsection{Convergence results}

We define $h_m^\star \eqdef  \nabla F_m(x^\star)$, $m=1,\ldots,M$, 
and we denote by $\kappa\eqdef L/\mmuF$ the conditioning of $F$.

\begin{theorem}[Linear convergence of MURANA]
\label{theo1}  In MURANA, suppose that 
$0<\lambda\leq \frac{1}{1+\chic}$ and $0<\rho\leq \frac{1}{1+\nuc}$, 
and set $\chic' \eqdef \frac{1}{\lambda}-1\geq \chic$ and $\nuc' \eqdef \frac{1}{\rho}-1\geq \nuc$. 
Choose $\bc>1$. Set $\ac\eqdef\max\big(1-(1+\bc)\zeta,0\big)$. 
Suppose that 
\begin{equation}
0<\gamma < \frac{2}{L}\frac{1}{\ac+(1+\bc)^2{\oma}}.
\end{equation}
Set 
$\eta
\eqdef 1-\gamma\left(\frac{2}{L} \frac{1}{\ac+(1+\bc)^2{\oma}}\right)^{-1} \in (0,1)
$. 
Define the Lyapunov function, for every $k\geq 0$,
\begin{equation}
\Psi^k \eqdef \big\|x^k-x^\star\big\|^2 +  (\bc^2+\bc)\gamma^2{\oma} \frac{1+\chic'}{1+\nuc'}\frac{1}{M}\sum_{m=1}^M \sqnorm{ h_m^k-h_m^\star }.\label{eqll1}
\end{equation}
Then, for every $k\geq 0$, we have \ 
$\Exp{ \Psi^{k} }  \leq c^k \Psi^0$,
 where 
\begin{equation}
c \eqdef 1-\min\left \{
\frac{2\gamma\eta\mmuF}{1+\nuc'},
  \frac{1-\bc^{-2}}{1+\chic'}\right \} <1.\label{eqcc1}
\end{equation}
\end{theorem}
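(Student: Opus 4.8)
The plan is to prove a one-step contraction $\mathbb{E}\big[\Psi^{k+1}\mid\mathcal{F}^k\big]\le c\,\Psi^k$, conditioning on everything generated up to iteration $k$, and then iterate using the tower property. Throughout I abbreviate $p_m\eqdef\nabla F_m(x^k)-\nabla F_m(x^\star)$, $q_m\eqdef h_m^k-h_m^\star$, $g^{k+1}\eqdef h^k+d^{k+1}$, and $I\eqdef\langle x^k-x^\star,\nabla F(x^k)-\nabla F(x^\star)\rangle$. Two preliminaries are needed first. A short induction on the updates $h_m^{k+1}=h_m^k+\lambda u_m^{k+1}$ and $h^{k+1}=h^k+\frac{\lambda}{M}\sum_m u_m^{k+1}$, started from $h^0=\frac1M\sum_m h_m^0$, gives $h^k=\frac1M\sum_m h_m^k$ for all $k$; together with the unbiasedness in \eqref{eqc1} this yields $\mathbb{E}\big[g^{k+1}\big]=\nabla F(x^k)$, so the search direction is an unbiased estimate of the full gradient. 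Second, optimality gives the fixed-point identity $x^\star=\prox_{\gamma R}\big(x^\star-\gamma\nabla F(x^\star)\big)$.

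I would first peel off the broadcast compression. Since $\mathcal{R}^k$ is unbiased, independent of the rest, and obeys \eqref{eq6}, writing $w\eqdef\tilde{x}^{k+1}-x^k$, expanding $\sqnorm{x^k-x^\star+\rho\mathcal{R}^k(w)}$, and using $\rho^2(1+\nuc)\le\rho^2(1+\nuc')=\rho$ with $\rho\le1$ collapses everything into a convex combination,
\begin{equation*}
\mathbb{E}\big[\sqnorm{x^{k+1}-x^\star}\big]\le(1-\rho)\sqnorm{x^k-x^\star}+\rho\,\mathbb{E}\big[\sqnorm{\tilde{x}^{k+1}-x^\star}\big].
\end{equation*}
For the inner proximal step, nonexpansiveness of $\prox_{\gamma R}$ and the fixed-point identity give $\sqnorm{\tilde{x}^{k+1}-x^\star}\le\sqnorm{x^k-x^\star-\gamma(g^{k+1}-\nabla F(x^\star))}$; taking expectations, splitting into bias and variance via unbiasedness, and applying the average-gain inequality \eqref{eqbo} to $\gv_m=p_m-q_m$ yields
\begin{equation*}
\mathbb{E}\big[\sqnorm{\tilde{x}^{k+1}-x^\star}\big]\le\sqnorm{x^k-x^\star}-2\gamma I+\gamma^2\big(\oma A-\zeta B+G\big),
\end{equation*}
where $A\eqdef\frac1M\sum_m\sqnorm{p_m-q_m}$, $B\eqdef\sqnorm{\nabla F(x^k)-h^k}$, and $G\eqdef\sqnorm{\nabla F(x^k)-\nabla F(x^\star)}$.

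The crux is to turn the gradient-dependent terms into the monotonicity quantity $I$ at exactly the right rate. For the control variates, expanding $h_m^{k+1}-h_m^\star=q_m+\lambda\mathcal{U}_m^k(p_m-q_m)$ and using $\lambda^2(1+\chic)\le\lambda^2(1+\chic')=\lambda$, the cross terms telescope to
\begin{equation*}
\mathbb{E}\Big[\tfrac1M\textstyle\sum_m\sqnorm{h_m^{k+1}-h_m^\star}\Big]\le(1-\lambda)D+\lambda P,
\end{equation*}
with $D\eqdef\frac1M\sum_m\sqnorm{q_m}$ and $P\eqdef\frac1M\sum_m\sqnorm{p_m}$. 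For the variance term I would split $A$ into its mean and its deviation about the mean, apply Young's inequality with parameter $1/\bc$ to each piece, and use the offset $-\zeta B$ to cancel the negative aggregate terms it produces, obtaining
\begin{equation*}
\oma A-\zeta B+G\le\oma\tfrac{1+\bc}{\bc}\,D+\oma(1+\bc)P+\big(1-(1+\bc)\zeta\big)G.
\end{equation*}
Cocoercivity of each $\nabla F_m$ and of $\nabla F$ then gives $P\le LI$ and $G\le LI$, so the last two terms are bounded by $\big(\oma(1+\bc)+\ac\big)LI$ with $\ac=\max(1-(1+\bc)\zeta,0)$.

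Finally I would assemble, choosing the Lyapunov weight $\alpha\eqdef(\bc^2+\bc)\gamma^2\oma\frac{1+\chic'}{1+\nuc'}$, which equals $\bc(1+\bc)\gamma^2\oma\,\rho/\lambda$ since $1+\chic'=1/\lambda$ and $1+\nuc'=1/\rho$. Adding $\alpha$ times the control-variate recursion to the chained compression/prox bounds, the coefficient of $I$ becomes $-2\rho\gamma+\rho\gamma^2 L\big((1+\bc)^2\oma+\ac\big)$: the square $(1+\bc)^2$ appears precisely because the $\alpha\lambda P\le\alpha\lambda LI$ contribution adds $\bc(1+\bc)\oma$ to the $(1+\bc)\oma$ already present. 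By the definition of $\eta$ this equals $-2\rho\gamma\eta I$, and strong convexity $I\ge\mmuF\sqnorm{x^k-x^\star}$ turns it into a contraction of the distance term with factor $1-\frac{2\gamma\eta\mmuF}{1+\nuc'}$. It then remains to verify the $D$-coefficient inequality $\rho\gamma^2\oma\frac{1+\bc}{\bc}+\alpha(1-\lambda)\le c\,\alpha$, which after cancellation reduces exactly to $1-c\le\lambda(1-\bc^{-2})=\frac{1-\bc^{-2}}{1+\chic'}$; taking $c$ as in \eqref{eqcc1} satisfies both requirements, and iterating gives $\mathbb{E}[\Psi^k]\le c^k\Psi^0$. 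I expect the main obstacle to be the bookkeeping in the variance decomposition: the constants $(1+\bc)$, $(1+\bc)^2$, and $\ac=\max(1-(1+\bc)\zeta,0)$ must emerge exactly, one must check that the negative aggregate terms created by the offset are genuinely discardable, and one must confirm that the two contraction requirements collapse precisely onto the two arguments of the minimum defining $c$.
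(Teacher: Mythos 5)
Your proposal is correct and follows essentially the same route as the paper's proof: the same bias--variance decompositions for the $\mathcal{R}^k$, $\mathcal{U}_m^k$ and $\mathcal{C}_m^k$ steps (with the same vanishing cross terms under $\rho=\frac{1}{1+\nuc'}$, $\lambda=\frac{1}{1+\chic'}$), the same mean--deviation split of the average-gain term combined with the Peter--Paul inequality with parameter $\bc$, and the same two coefficient conditions that produce exactly the two arguments of the minimum defining $c$. The only, immaterial, difference is bookkeeping: you bound $P\le LI$ and $G\le LI$ by cocoercivity first and then apply strong convexity to the resulting $-2\rho\gamma\eta I$ term, whereas the paper splits $I=\eta I+(1-\eta)I$ and uses cocoercivity to make the coefficient of $\frac{1}{M}\sum_{m=1}^M\sqnorm{\nabla F_m(x^k)-h_m^\star}$ vanish --- the algebra is identical.
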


Thus, MURANA converges linearly with rate $c$, in expectation; in particular, for every $k\geq 0$, $\Exp{ \sqnorm{ x^k-x^\star } } \leq c^k \Psi^0$. 
In addition, if MURANA is initialized with $h_m^0=\nabla F_m(x^0)$, for every $m\in[M]$, we have
\begin{equation}
\Psi^0\leq \bigg(1+(\bc^2+\bc)\gamma^2{\oma} \frac{1+\chic'}{1+\nuc'}L^2\bigg)\big\|x^0-x^\star\big\|^2.
\end{equation}\bigskip

The proof of Theorem~\ref{theo1} is deferred to Section~\ref{sec7}, for ease of reading.
\bigskip

In Theorem~\ref{theo1}, we have
\begin{equation*}
\gamma =  \frac{2(1-\eta)}{L}\frac{1}{\ac+(1+\bc)^2{\oma}},
\end{equation*}
so that
\begin{equation*}
2\gamma\eta\mmuF = 4(1-\eta)\eta \frac{\mmuF}{L}\frac{1}{\ac+(1+\bc)^2{\oma}}.
\end{equation*}
Maximizing this term, which appears in the rate $c$, with respect to $\eta$ yields $\eta=\frac{1}{2}$, so that the best choice for $\gamma$ is 
\begin{equation*}
\gamma =  \frac{1}{L}\frac{1}{\ac+(1+\bc)^2{\oma}}.
\end{equation*}

Thus, we can provide a simplified version of Theorem~\ref{theo1} as follows:

\begin{corollary}\label{cor1}
In MURANA, suppose that $\lambda= \frac{1}{1+\chic}$ and $\rho= \frac{1}{1+\nuc}$.  
Choose $\bc>1$. Set $\ac\eqdef\max\big(1-(1+\bc)\zeta,0\big)$.  Suppose that 
\begin{equation}
0<\gamma \leq \frac{1}{L}\frac{1}{\ac+(1+\bc)^2{\oma}}.\label{eqgammac1}
\end{equation}
Then, using $\Psi^k$ defined in \eqref{eqll1}, with $\chic'=\chic$ and $\nuc'=\nuc$, we have, 
for every $k\geq 0$, \linebreak
$\Exp{ \Psi^{k} }  \leq c^k \Psi^0$,
where 
\begin{equation}
c\eqdef 1-\min\left \{
\frac{\gamma \mmuF}{1+\nuc}
, \frac{1-\bc^{-2}}{1+\chic}\right\} <1.
\end{equation}
Therefore, if $\bc$ is fixed and $\gamma=\Theta\big(\frac{1}{L}\frac{1}{\ac+(1+\bc)^2{\oma}}\big)$, the asymptotic complexity of MURANA to achieve $\epsilon$-accuracy is
\begin{equation}
\mathcal{O}\Bigg(\Big(\kappa(1+\oma)(1+\nuc)+\chic \Big)\log\!\bigg(\frac{1}{\epsilon}\bigg)\Bigg)
\end{equation}
 iterations.\end{corollary}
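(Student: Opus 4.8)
The plan is to establish the one-step contraction $\Exp{\Psi^{k+1}}\le c\,\Psi^k$ and then iterate it. I would condition on all randomness up to iteration $k$ and integrate out the operators one layer at a time, in the order $\mathcal{R}^k$, then the prox geometry, then $\mathcal{C}_m^k$ and $\mathcal{U}_m^k$, invoking the geometry of $F$ only at the very end. First I would integrate $\mathcal{R}^k$: since it is unbiased with conic variance $\nuc$ and independent of the other operators, and since $\rho\le\frac{1}{1+\nuc}$ gives $\rho^2(1+\nuc)\le\rho^2(1+\nuc')=\rho$, the elementary expansion of $\sqnorm{x^{k+1}-x^\star}$ around $\tilde x^{k+1}-x^k$ collapses to the convex combination $(1-\rho)\sqnorm{x^k-x^\star}+\rho\,\Exp{\sqnorm{\tilde x^{k+1}-x^\star}}$. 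I would then bound $\sqnorm{\tilde x^{k+1}-x^\star}$ using the fixed-point identity $x^\star=\prox_{\gamma R}(x^\star-\gamma\nabla F(x^\star))$ and nonexpansiveness of $\prox_{\gamma R}$, which turns the prox step into the descent bound $\sqnorm{\tilde x^{k+1}-x^\star}\le\sqnorm{x^k-x^\star}-2\gamma\langle x^k-x^\star,\,g^k-\nabla F(x^\star)\rangle+\gamma^2\sqnorm{g^k-\nabla F(x^\star)}$, where $g^k\eqdef h^k+d^{k+1}$ is the stochastic gradient.

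Next I would integrate the first set of operators. Because $\Exp{g^k}=\nabla F(x^k)$, the cross term becomes $-2\gamma\langle x^k-x^\star,\,\nabla F(x^k)-\nabla F(x^\star)\rangle$, and the variance splits into bias plus noise, $\sqnorm{\nabla F(x^k)-\nabla F(x^\star)}+\Exp{\sqnorm{e^k}}$, where the zero-mean error $e^k=\frac1M\sum_m(\mathcal{C}_m^k(w_m)-w_m)$ with $w_m\eqdef\nabla F_m(x^k)-h_m^k$ is controlled by the average-gain inequality \eqref{eqbo}, giving $\Exp{\sqnorm{e^k}}\le\frac{\oma}{M}\sum_m\sqnorm{w_m}-\zeta\sqnorm{\nabla F(x^k)-h^k}$. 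In parallel I would treat the control-variate recursion: with $\lambda\le\frac{1}{1+\chic}$ (so $\lambda^2(1+\chic)\le\lambda$) the same telescoping identity used for the $x$-step yields $\frac1M\sum_m\Exp{\sqnorm{h_m^{k+1}-h_m^\star}}\le(1-\lambda)\frac1M\sum_m\sqnorm{h_m^k-h_m^\star}+\lambda\frac1M\sum_m\sqnorm{\nabla F_m(x^k)-\nabla F_m(x^\star)}$.

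The crux is the assembly. I would write $w_m=(\nabla F_m(x^k)-\nabla F_m(x^\star))-(h_m^k-h_m^\star)$, use the bias–variance identity $\frac1M\sum_m\sqnorm{w_m}=\frac1M\sum_m\sqnorm{w_m-\bar w}+\sqnorm{\bar w}$ with $\bar w=\nabla F(x^k)-h^k$, and apply a Young inequality with parameter $\bc$ to separate the gradient-difference energy $\frac1M\sum_m\sqnorm{\nabla F_m(x^k)-\nabla F_m(x^\star)}$ from the control-variate energy $\frac1M\sum_m\sqnorm{h_m^k-h_m^\star}$. The offset $-\zeta\sqnorm{\bar w}$ is precisely what lets the effective coefficient multiplying the gradient-difference energy shrink from $1$ to $\ac=\max(1-(1+\bc)\zeta,0)$. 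I would then invoke the geometry of $F$: co-coercivity of each $L$-smooth convex $F_m$ gives $\frac1M\sum_m\sqnorm{\nabla F_m(x^k)-\nabla F_m(x^\star)}\le L\langle x^k-x^\star,\nabla F(x^k)-\nabla F(x^\star)\rangle$ and likewise $\sqnorm{\nabla F(x^k)-\nabla F(x^\star)}\le L\langle\cdots\rangle$, so all gradient-difference energy is absorbed into the inner-product term; the stepsize bound $\gamma<\frac{2}{L}\frac{1}{\ac+(1+\bc)^2\oma}$ makes the net coefficient of that inner product negative, equal to $-2\rho\gamma\eta$ by the definition of $\eta$, and $\mmuF$-strong convexity converts it into $-2\rho\gamma\eta\mmuF\sqnorm{x^k-x^\star}$, producing the factor $\frac{2\gamma\eta\mmuF}{1+\nuc'}$.

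Finally I would close the recursion by choosing the Lyapunov weight $(\bc^2+\bc)\gamma^2\oma\frac{1+\chic'}{1+\nuc'}$ so that the control-variate energy created by the descent variance is exactly absorbed by the $(1-\lambda)$-contraction of the $h$-term, leaving the residual rate $\frac{1-\bc^{-2}}{1+\chic'}$; the slower of the two factors is the rate $c$ in \eqref{eqcc1}. I expect the main obstacle to be this simultaneous balancing: the single parameter $\bc$ and the weight must be matched so that the gradient-difference terms cancel against co-coercivity \emph{and} the control-variate terms cancel against their own contraction, while the offset cross-terms are tracked exactly (not crudely bounded) so that the smoothness constant lands on $\ac$ rather than $1$ and the denominator comes out as $\ac+(1+\bc)^2\oma$. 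Iterating $\Exp{\Psi^{k+1}}\le c\,\Psi^k$ gives $\Exp{\Psi^k}\le c^k\Psi^0$, and the initialization estimate follows from $\sqnorm{h_m^0-h_m^\star}=\sqnorm{\nabla F_m(x^0)-\nabla F_m(x^\star)}\le L^2\sqnorm{x^0-x^\star}$ by $L$-smoothness.
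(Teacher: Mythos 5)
Your proposal does not take the paper's route to this corollary: the paper's own proof is a one-line reduction to Theorem~\ref{theo1}, namely that the condition \eqref{eqgammac1} (which lies strictly inside Theorem~\ref{theo1}'s stepsize range) forces $\eta = 1-\frac{\gamma L(\ac+(1+\bc)^2\oma)}{2}\geq \frac{1}{2}$, hence $2\gamma\eta\mmuF\geq\gamma\mmuF$, so the rate \eqref{eqcc1} is at most the corollary's rate. What you do instead is reconstruct from scratch the full Lyapunov analysis of Theorem~\ref{theo1} (the paper's Section~\ref{sec7}), specialized to $\lambda=\frac{1}{1+\chic}$, $\rho=\frac{1}{1+\nuc}$. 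That reconstruction is sound and matches the paper's argument step for step: the collapse of the $\mathcal{R}^k$ step to a convex combination, the prox/fixed-point bound, the bias--variance split controlled by \eqref{eqbo}, the Peter--Paul separation that produces $\ac$ and the constant $(1+\bc)^2$, and the weight balancing that leaves the residual rate $\frac{1-\bc^{-2}}{1+\chic'}$. Your only deviation is cosmetic: you absorb the gradient-difference energy into the inner product via cocoercivity, whereas the paper splits the inner product into an $\eta$ part (used with strong convexity) and a $(1-\eta)$ part (used with cocoercivity); these are the same computation.

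There is, however, a genuine gap at the very end. Your analysis terminates with the contraction factor $\frac{2\gamma\eta\mmuF}{1+\nuc'}$, i.e.\ with the rate $c$ of \eqref{eqcc1}, and you explicitly say "the slower of the two factors is the rate $c$ in \eqref{eqcc1}" --- but the corollary claims the rate with $\frac{\gamma\mmuF}{1+\nuc}$ in place of $\frac{2\gamma\eta\mmuF}{1+\nuc'}$. The bridge between the two, which is exactly the content of the paper's proof, is the observation that \eqref{eqgammac1} gives $\gamma L(\ac+(1+\bc)^2\oma)\leq 1$, hence $\eta\geq\frac{1}{2}$ and $2\gamma\eta\mmuF\geq\gamma\mmuF$; only then does the contraction you establish imply $\Exp{\Psi^k}\leq c^k\Psi^0$ with the corollary's stated $c$. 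Without this line the stated bound is not proved; it is a one-line fix, but it is the whole point of the corollary. You also leave the final complexity claim $\mathcal{O}\big((\kappa(1+\oma)(1+\nuc)+\chic)\log(1/\epsilon)\big)$ unaddressed; it follows routinely by bounding $\frac{1}{1-c}=\max\big\{\frac{1+\nuc}{\gamma\mmuF},\frac{1+\chic}{1-\bc^{-2}}\big\}$ with $\gamma=\Theta\big(\frac{1}{L}\frac{1}{\ac+(1+\bc)^2\oma}\big)$, $\ac\leq 1$ and $\bc$ fixed, but it is part of the statement and should be said.
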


\begin{proof}
The statements follow directly from the observation that, in the notations of Theorem~\ref{theo1}, the condition \eqref{eqgammac1} implies that $\eta\geq \frac{1}{2}$, so that $2\gamma\eta\mmuF \geq \gamma\mmuF$. 
\end{proof}

In the conditions of Corollary~\ref{cor1}, if we set $\gamma = \frac{1}{L}\frac{1}{\ac+(1+\bc)^2{\oma}}$,  we have:
 \begin{equation*}
c = 1-\min\left\{ 
\frac{1}{\kappa}\frac{1}{1+\nuc}\frac{1}{\ac+(1+\bc)^2{\oma}},  \frac{1-\bc^{-2}}{1+\chic} \right\}.
\end{equation*}
Thus, to balance the two constants $(1+\bc)^2$ and $1-\bc^{-2}$, we can choose 
$\bc=\sqrt{5}-1$, so that 
\begin{equation*}
c \leq  1-\min\left\{ 
\frac{1}{\kappa}\frac{1}{1+\nuc}\frac{1}{\ac+5{\oma}},
  \frac{1}{3}\frac{1}{1+\chic} \right\}.
\end{equation*}
Another choice is 
$\bc=\sqrt{6}-1$, so that 
\begin{equation*}
c \leq  1-\min\left\{ 
\frac{1}{\kappa}\frac{1}{1+\nuc}\frac{1}{\ac+6{\oma}},
  \frac{1}{2}\frac{1}{1+\chic} \right\}.
\end{equation*}

\section{Particular case: DIANA}\label{secdia}

When $\mathcal{U}^k_m=\mathcal{C}^k_m$, for every $k\geq 0$ and $m\in[M]$, and $\mathcal{R}^k=\mathrm{Id}$, MURANA-D reverts to DIANA, shown as Algorithm 3 (in the case $N=M$, i.e.\ full participation). 
DIANA was proposed by \citet{mis19} and generalized (with $R=0$) by
 \citet{hor19}. It was then further extended (still with $R=0$) to the case of  compression of the model during broadcast by \citet{gor20}, where it is called `DIANA with bi-directional quantization'; 
this corresponds to  $\mathcal{R}^k\neq \mathrm{Id}$ here, and we still call the algorithm DIANA in this case. An extension to $R\neq 0$ was made  by \citet{sigma_k}, who performed a unified analysis of a large class of non-variance-reduced and variance-reduced SGD-type methods under strong quasi-convexity. An analysis in the convex regime was performed by \citet{sigma_k-convex}.

However,  to date, DIANA was studied  for independent operators $\mathcal{C}^k_m$ only. Even in this case, our following results  are more general than existing ones. For instance, in Theorem~1 of \citet{hor19}, all functions $F_m$ are supposed to be strongly convex, whereas we only require their average $F$ to be strongly convex; this is a significantly weaker assumption.

Thus, we generalize  DIANA to arbitrary operators $\mathcal{C}^k_m$, to the presence of a regularizer $R$, and to possible randomization, or compression, of the model updates. 
As a direct application of Corollary~\ref{cor1} with $\chic=\omegac$, we have:

\begin{theorem}[Linear convergence of DIANA]
\label{theo2}
In DIANA, suppose that $\lambda= \frac{1}{1+\omegac}$ and $\rho= \frac{1}{1+\nuc}$. 
Choose $\bc>1$.  Set $\ac\eqdef\max\big(1-(1+\bc)\zeta,0\big)$.  Suppose that 
\begin{equation*}
0<\gamma \leq \frac{1}{L}\frac{1}{\ac+(1+\bc)^2{\oma}}.
\end{equation*}
Define the Lyapunov function, for every $k\geq 0$,
\begin{equation}
\Psi^k \eqdef \big\|x^k-x^\star\big\|^2 +  (\bc^2+\bc)\gamma^2{\oma} \frac{1+\omegac}{1+\nuc}\frac{1}{M}\sum_{m=1}^M \sqnorm{h_m^k-h_m^\star}.
\end{equation} 
Then, for every $k\geq 0$, we have \ 
$\Exp{ \Psi^{k} }  \leq c^k \Psi^0$,
where 
\begin{equation}
c \eqdef 1-\min\left \{ \frac{\gamma \mmuF}{1+\nuc}
, \frac{1-\bc^{-2}}{1+\omegac}\right\}<1.\label{eqcc2}
\end{equation}
Therefore, if $\bc$ is fixed and $\gamma=\Theta(\frac{1}{L}\frac{1}{\ac+(1+\bc)^2{\oma}})$, the  complexity of DIANA to achieve $\epsilon$-accuracy is 
\begin{equation}
\mathcal{O}\Bigg(\Big(\kappa(1+\oma)(1+\nuc)+\omegac \Big)\log\!\bigg(\frac{1}{\epsilon}\bigg)\Bigg)
\end{equation}
 iterations.
\end{theorem}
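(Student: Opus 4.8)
The plan is to obtain Theorem~\ref{theo2} as a direct specialization of Corollary~\ref{cor1}, exploiting the fact that DIANA is exactly MURANA-D in which the second family of operators coincides with the first, namely $\mathcal{U}_m^k=\mathcal{C}_m^k$ for every $m\in[M]$ and $k\geq 0$ (with the broadcast operators $\mathcal{R}^k$ left general, so as to cover the bidirectional-compression variant). The first step I would take is to check that this identification is admissible within the framework: Section~2.1 explicitly allows $\mathcal{C}_m^k$ and $\mathcal{U}_{m'}^k$ to be dependent for a fixed $k$, and taking them literally equal is simply the extreme, fully dependent, case, so none of the stated conditions is violated. Under this identification the marginal gain $\chic$ of the $\mathcal{U}_m^k$ equals the marginal gain $\omegac$ of the $\mathcal{C}_m^k$, i.e.\ $\chic=\omegac$, while the average gain $\oma$, the offset $\zeta$, and the constant $\nuc$ remain the quantities already attached to the $\mathcal{C}_m^k$ and $\mathcal{R}^k$ and are therefore unchanged.

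Second, I would verify that the hypotheses of Corollary~\ref{cor1} are met verbatim. The prescribed choices $\lambda=\frac{1}{1+\omegac}=\frac{1}{1+\chic}$ and $\rho=\frac{1}{1+\nuc}$ are exactly those required there, so Corollary~\ref{cor1} applies with $\chic'=\chic=\omegac$ and $\nuc'=\nuc$. The stepsize bound $0<\gamma\leq \frac{1}{L}\frac{1}{\ac+(1+\bc)^2\oma}$ and the definition $\ac=\max(1-(1+\bc)\zeta,0)$ coincide with the ones in the corollary, since $\oma$ and $\zeta$ refer to the $\mathcal{C}_m^k$, which are the same operators in both algorithms.

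Third, the conclusion follows by mere substitution. Replacing $\chic$ by $\omegac$ in the Lyapunov function \eqref{eqll1} turns the coefficient $\frac{1+\chic'}{1+\nuc'}$ into $\frac{1+\omegac}{1+\nuc}$, which is precisely the Lyapunov function stated in Theorem~\ref{theo2}; the same replacement in the rate turns $\frac{1-\bc^{-2}}{1+\chic}$ into $\frac{1-\bc^{-2}}{1+\omegac}$, giving \eqref{eqcc2}; and in the complexity estimate it turns the additive term $\chic$ into $\omegac$ while keeping the factor $(1+\nuc)$, which yields $\mathcal{O}\big((\kappa(1+\oma)(1+\nuc)+\omegac)\log(1/\epsilon)\big)$ iterations.

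The only genuinely non-routine point is the bookkeeping in the first step: one must make sure that specializing $\mathcal{U}_m^k=\mathcal{C}_m^k$ neither covertly requires nor breaks any of the independence or second-moment conditions underpinning Corollary~\ref{cor1}. Since the MURANA analysis is designed to tolerate arbitrary dependence between the two families for a fixed $k$ (and independence across iterations), and since the $(\mathcal{R}^k)_{k\geq 0}$ remain mutually independent and independent from everything else regardless of this identification, no part of the proof of Theorem~\ref{theo1} or Corollary~\ref{cor1} needs to be redone. Hence nothing substantive remains beyond this verification, which is exactly why the statement can be presented as a direct application.
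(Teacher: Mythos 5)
Your proposal is correct and matches the paper's own argument exactly: the paper presents Theorem~\ref{theo2} precisely as a direct application of Corollary~\ref{cor1} with $\chic=\omegac$, relying on the framework's explicit allowance that $\mathcal{U}_m^k$ and $\mathcal{C}_m^k$ may be dependent (indeed equal) for fixed $k$. Your additional verification that this identification breaks no independence or second-moment assumption is exactly the (implicit) justification the paper relies on.
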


\begin{algorithm}[t]
	\caption{DIANA-PP (new)  (reverts to DIANA if $N=M)$}
	\begin{algorithmic}[1]
		\STATE \textbf{input:}  parameters $\gamma>0$, $\lambda>0$, $\rho>0$, participation level $N\in[M]$, initial vectors $x^0\in\mathbb{R}^d$ and $h_m^{0}\in\mathbb{R}^d$, $m=1,\ldots,M$
		\STATE  $h^{0}\eqdef\frac{1}{M}\sum_{m=1}^M h_m^{0}$,  $r^0\eqdef 0$, $x^{-1}=x^0$ 
		\FOR{$k=0,1,\ldots$}{}
		\STATE pick $\Omega^k\subset [M]$ of size $N$ uniformly at random 
		\STATE  at master: broadcast $r^{k}$  to all nodes 
		\FOR{$m\in \Omega_k$, at nodes in parallel,}{}
		\STATE $x^k \eqdef x^{k-1}+\rho r^k$
\STATE $d_m^{k+1}\eqdef\mathcal{C}_m^k\big(\nabla F_m(x^k)-h_m^{k}\big)$
\STATE $h_m^{k+1}\eqdef h_m^{k}+\lambda d_m^{k+1}$
\STATE convey $d_m^{k+1}$  to master
\ENDFOR
\FOR{$m\notin \Omega_k$, at nodes in parallel,}{}
\STATE $x^k \eqdef x^{k-1}+\rho r^k$
\STATE $h_m^{k+1}\eqdef h_m^{k}$
\ENDFOR
\STATE at master:
\STATE $d^{k+1}\eqdef\frac{1}{M}\sum_{m\in\Omega_k}  d_m^{k+1}$
\STATE $h^{k+1}\eqdef h^k+\lambda d^{k+1}$
\STATE $\tilde{x}^{k+1}\eqdef\prox_{\gamma R}\big(x^{k}-\gamma (h^k+d^{k+1})\big)$
\STATE $r^{k+1} \eqdef \mathcal{R}^k (\tilde{x}^{k+1}-x^k)$
\STATE $x^{k+1} \eqdef x^{k}+\rho r^{k+1}$
		\ENDFOR
	\end{algorithmic}
\end{algorithm}

\subsection{Partial participation  in DIANA} We make use of the possibility of having dependent stochastic operators and we use the composition of operators ${\mathcal{C}'}^k_{\!\!m} \circ \mathcal{C}^k_m$, like in Proposition~\ref{prop2}, with the ${\mathcal{C}'}^k_{\!\!m}$ being sampling operators like in Proposition~\ref{prop1}.
This yields DIANA-PP, shown as Algorithm 3. Since DIANA-PP is a particular case of DIANA with such composed operators, we can apply Theorem~\ref{theo2}, with $\omegac$, the marginal gain of the composed operators here, equal to $\omegac+\frac{M-N}{N}(1+\omegac)$, $\oma=\frac{\omegac}{M}+\frac{M-N}{N(M-1)}(1+\omegac)$, 
$\zeta = \frac{M-N}{N(M-1)}$:

\begin{theorem}[Linear convergence of DIANA-PP]
\label{theo3}
In DIANA-PP, suppose that the $(\mathcal{C}^k_m)_{m=1}^M$ are mutually independent and set $\oma\eqdef\frac{\omegac}{M}+\frac{M-N}{N(M-1)}(1+\omegac)$. Suppose that
$\lambda= \frac{N}{M}\frac{1}{1+\omegac}$ and $\rho=\frac{1}{1+\nuc}$. Choose $\bc>1$. 
Set $\ac\eqdef\max\Big(1-(1+\bc)\frac{M-N}{N(M-1)},0\Big)$. 
Suppose that 
\begin{equation*}
0<\gamma \leq \frac{1}{L}\frac{1}{\ac+(1+\bc)^2{\oma}}.
\end{equation*}
Define the Lyapunov function, for every $k\geq 0$,
\begin{equation}
\Psi^k \eqdef \big\|x^k-x^\star\big\|^2 +  (\bc^2+\bc)\gamma^2{\oma} \frac{1+\omegac}{1+\nuc}\frac{1}{N}\sum_{m=1}^M \sqnorm{h_m^k-h_m^\star}.
\end{equation}
Then, for every $k\geq 0$, we have \ 
$\Exp{ \Psi^{k} }  \leq c^k \Psi^0$,
where 
\begin{equation}c \eqdef 1-\min\left \{ 
\frac{\gamma \mmuF}{1+\nuc}, \frac{N}{M}\frac{1-\bc^{-2}}{1+\omegac}\right\}.
\end{equation}
Therefore,  if $\bc$ is fixed and $\gamma=\Theta(\frac{1}{L}\frac{1}{\ac+(1+\bc)^2{\oma}})$, the asymptotic complexity of DIANA-PP to achieve $\epsilon$-accuracy is 
\begin{equation}
  \mathcal{O}\Bigg(\left(\kappa\!\left(1+\frac{\omegac}{N}\right)\!(1+\nuc)+\frac{M}{N}(1+\omegac)\right)\log\!\bigg(\frac{1}{\epsilon}\bigg)\Bigg)
\end{equation}
 iterations.
\end{theorem}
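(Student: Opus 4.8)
The plan is to recognize DIANA-PP as the algorithm DIANA (Theorem~\ref{theo2}) run with the composed stochastic operators ${\mathcal{C}'}^k_{\!\!m}\circ\mathcal{C}^k_m$, where $\mathcal{C}^k_m$ is the inner compression operator with marginal gain $\omegac$ and ${\mathcal{C}'}^k_{\!\!m}$ is the $N$-nice sampling operator of Proposition~\ref{prop1}, and then to specialize the conclusion of Theorem~\ref{theo2} to the gains of this composition. Write $\hat{\mathcal{C}}^k_m\eqdef{\mathcal{C}'}^k_{\!\!m}\circ\mathcal{C}^k_m$ and denote by $\omegac^{\circ}$, $\oma$, $\zeta$ the marginal gain, average gain, and offset of $\hat{\mathcal{C}}^k_m$.

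First I would verify the algorithmic identity: that the iterates $(x^k)$ and $(h_m^k)$ produced by Algorithm~3 coincide with those of DIANA driven by $\hat{\mathcal{C}}^k_m$. Because ${\mathcal{C}'}^k_{\!\!m}$ multiplies its argument by $\frac{M}{N}$ when $m\in\Omega^k$ and returns $0$ otherwise, driving DIANA with $\hat{\mathcal{C}}^k_m$ amounts exactly to restricting the updates to the active set $\Omega^k$ together with the implicit $\frac{M}{N}$ rescaling; in particular $h_m^{k+1}=h_m^k$ for $m\notin\Omega^k$, and the DIANA stepsize $\lambda=1/(1+\omegac^{\circ})$ collapses to the value $\frac{N}{M}\frac{1}{1+\omegac}$ stated in the theorem via the key identity $1+\omegac^{\circ}=\frac{M}{N}(1+\omegac)$ derived below (while $\rho=\frac{1}{1+\nuc}$ is unchanged). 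Tracking these $\frac{M}{N}$ factors consistently across the $d^{k+1}$, $h_m^{k+1}$ and $h^{k+1}$ lines so that the two algorithms produce the same sequences is the step that must be done with care, and I expect it to be the main (though routine) obstacle.

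Next I would compute the three constants of the composition. Proposition~\ref{prop1} gives, for the sampling operators, $\omegac'=\frac{M-N}{N}$ and $\oma'=\zeta'=\frac{M-N}{N(M-1)}$. Since by hypothesis the inner operators $(\mathcal{C}^k_m)_{m=1}^M$ are mutually independent, Proposition~\ref{prop2} applies and yields the marginal gain $\omegac^{\circ}=\omegac+\omegac'+\omegac\omegac'=\omegac+\frac{M-N}{N}(1+\omegac)$, the offset $\zeta=\zeta'=\frac{M-N}{N(M-1)}$, and the average gain $\frac{\omegac}{M}(1-\zeta')+\oma'(1+\omegac)$. I would then bound this average gain from above by $\oma\eqdef\frac{\omegac}{M}+\frac{M-N}{N(M-1)}(1+\omegac)$, dropping the factor $1-\zeta'\le1$; this is legitimate because \eqref{eqbo} remains valid when $\oma$ is replaced by any larger value while $\zeta$ is kept fixed, and Theorem~\ref{theo2} only uses an upper bound on the average gain. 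I would also note $\zeta\le\oma$, so the required $\zeta\in[0,\oma]$ holds, and that $\ac=\max(1-(1+\bc)\zeta,0)$ reduces to the expression in Theorem~\ref{theo3}.

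Finally I would substitute these constants into Theorem~\ref{theo2}. The identity $1+\omegac^{\circ}=\frac{M}{N}(1+\omegac)$ turns the factor $\frac{1+\omegac^{\circ}}{1+\nuc}\frac{1}{M}$ of the Lyapunov function in Theorem~\ref{theo2} into $\frac{1+\omegac}{1+\nuc}\frac{1}{N}$, which is exactly the coefficient of $\frac{1}{N}\sum_{m=1}^M\sqnorm{h_m^k-h_m^\star}$ in the Lyapunov function of Theorem~\ref{theo3}, and it turns the rate term $\frac{1-\bc^{-2}}{1+\omegac^{\circ}}$ into $\frac{N}{M}\frac{1-\bc^{-2}}{1+\omegac}$, giving the stated $c$. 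For the complexity, I would use $\omegac^{\circ}=\frac{M}{N}(1+\omegac)-1=\Theta\big(\frac{M}{N}(1+\omegac)\big)$ together with $1+\oma=\Theta\big(1+\frac{\omegac}{N}\big)$ (both the $\frac{\omegac}{M}$ and the $\frac{M-N}{N(M-1)}(1+\omegac)$ contributions are $O(\frac{1+\omegac}{N})$) in the bound $\mathcal{O}\big((\kappa(1+\oma)(1+\nuc)+\omegac^{\circ})\log(1/\epsilon)\big)$ of Theorem~\ref{theo2} to recover the announced $\mathcal{O}\big((\kappa(1+\frac{\omegac}{N})(1+\nuc)+\frac{M}{N}(1+\omegac))\log(1/\epsilon)\big)$.
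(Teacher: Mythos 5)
Your proposal is correct and follows exactly the paper's own route: the paper proves Theorem~\ref{theo3} by observing that DIANA-PP is DIANA run with the composed operators of Proposition~\ref{prop2} (outer sampling operators from Proposition~\ref{prop1} composed with the inner compressors, whose gains are $\omegac+\frac{M-N}{N}(1+\omegac)$, $\oma=\frac{\omegac}{M}+\frac{M-N}{N(M-1)}(1+\omegac)$ and $\zeta=\frac{M-N}{N(M-1)}$) and then invoking Theorem~\ref{theo2}. If anything, you are more careful than the paper, which silently replaces the exact average gain $\frac{\omegac}{M}(1-\zeta')+\oma'(1+\omegac)$ of Proposition~\ref{prop2} by the upper bound $\oma$ without comment, and does not spell out the identity $1+\omegac+\frac{M-N}{N}(1+\omegac)=\frac{M}{N}(1+\omegac)$ that converts the stepsize $\lambda$, the Lyapunov function, and the rate of Theorem~\ref{theo2} into the stated ones.
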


To summarize, DIANA is the particular case of DIANA-PP with full participation, i.e. $N=M$. Its convergence with general, possibly dependent, operators $\mathcal{C}_m^k$, is established in Theorem~\ref{theo2}. DIANA-PP is more general than DIANA,  since it allows for partial participation, but its convergence is established in Theorem~\ref{theo3} only when the operators $(\mathcal{C}_m^k)_{m=1}^M$ are mutually independent.

\section{Particular case: SAGA}\label{secsag}

	\begin{algorithm}[t]
	\caption{Minibatch-SAGA (reverts to SAGA if $N=1$)}
	\begin{algorithmic}[1]
		\STATE \textbf{input:}  stepsize $\gamma>0$, sampling size $N\in[M]$, initial vectors $x^0\in\mathbb{R}^d$ and $h_m^{0}\in\mathbb{R}^d$, $m=1,\ldots,M$
		\STATE  $h^{0}\eqdef\frac{1}{M}\sum_{m=1}^M h_m^{0}$
		\FOR{$k=0,1,\ldots$}{}
		\STATE pick $\Omega^k\subset [M]$ of size $N$ uniformly at random
		\FOR{$m\in \Omega_k$}{}
\STATE $h_m^{k+1}\eqdef \nabla F_m(x^k)$
\ENDFOR
\FOR{$m\in[M]\backslash \Omega_k$}{}
\STATE $h_m^{k+1}\eqdef h_m^{k}$
\ENDFOR
\STATE $d^{k+1}\eqdef \frac{1}{N}\sum_{m\in\Omega^k}  (h_m^{k+1}-h_m^{k})$
\STATE $x^{k+1}\eqdef\prox_{\gamma R}\big(x^{k}-\gamma (h^k+d^{k+1})\big)$
\STATE $h^{k+1}\eqdef h^k+\frac{N}{M} d^{k+1}$
		\ENDFOR
	\end{algorithmic}
\end{algorithm}

When $\mathcal{U}^k_m=\mathcal{C}^k_m$, 
for every $k\geq 0$ and $m\in[M]$, and these operators are set as dependent sampling operators like in Proposition~\ref{prop1}, and $\mathcal{R}^k=\mathrm{Id}$, 
MURANA becomes Minibatch-SAGA, shown as Algorithm 4. We have $1+\omegac = \frac{M}{N}$, $\oma=\zeta=\frac{M-N}{N(M-1)}$, and we set $\lambda=\frac{1}{1+\omegac}= \frac{N}{M}$ and $\rho=1$. Minibatch-SAGA is SAGA  \citep{def14} if $N=1$ and proximal gradient descent if $N=M$, so Minibatch-SAGA interpolates between these two regimes for $1<N<M$. 
 This algorithm was called `minibatch SAGA with $\tau$-nice sampling' by \citet{gow20}, with their $\tau$ being our $N$, but studied only with $R=0$. It was called `q-SAGA' by \citet{hof15}  with their $q$ being our $N$, but studied only with all functions $F_m$ strongly convex. Thus, the following convergence results are new, to the best of our knowledge.
 
 As an application of Corollary~\ref{cor1}, we have:

\begin{theorem}[Linear convergence of Minibatch-SAGA]
\label{theo4}
Set $\oma\eqdef\frac{M-N}{N(M-1)}$ and
choose $\bc>1$. Set $\ac\eqdef\max\Big(1-(1+\bc)\frac{M-N}{N(M-1)},0\Big)$. 
 In Minibatch-SAGA, suppose that 
\begin{equation*}
0<\gamma \leq \frac{1}{L}\frac{1}{\ac+(1+\bc)^2{\oma}}.
\end{equation*}
Define the Lyapunov function, for every $k\geq 0$,
\begin{equation}
\Psi^k \eqdef \big\|x^k-x^\star\big\|^2 +  (\bc^2+\bc)\gamma^2{\oma}\frac{1}{N}\sum_{m=1}^M \sqnorm{h_m^k-h_m^\star}.
\end{equation}
Then, for every $k\geq 0$, we have \ 
$\Exp{ \Psi^{k} }  \leq c^k \Psi^0$,
 where 
 \begin{equation} 
c \eqdef 1-\min\left\{
\gamma \mmuF, \frac{N(1-\bc^{-2})}{M} \right\}<1.
\end{equation}
Therefore, if $\gamma=\Theta(\frac{1}{L})$, the asymptotic complexity of Minibatch-SAGA to achieve $\epsilon$-accuracy is $\mathcal{O}\left((\kappa+\frac{M}{N})\log(1/\epsilon)\right)$ iterations and $\mathcal{O}\big((N\kappa+M)\log(1/\epsilon)\big)$ gradient calls, since there are $N$ gradient calls per iteration.\end{theorem}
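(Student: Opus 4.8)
The plan is to recognize Minibatch-SAGA as a special case of MURANA and then invoke Corollary~\ref{cor1} verbatim, so that the only real work is a line-by-line algorithm identification and the substitution of the right constants.

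First I would check the identification. Take $\mathcal{U}_m^k=\mathcal{C}_m^k$ to be the $N$-nice sampling operators of Proposition~\ref{prop1}, $\mathcal{R}^k=\mathrm{Id}$, $\lambda=N/M$, and $\rho=1$, and unroll the MURANA updates. The decisive cancellation is that for $m\in\Omega^k$ one has $u_m^{k+1}=d_m^{k+1}=\frac{M}{N}\big(\nabla F_m(x^k)-h_m^k\big)$, so $h_m^{k+1}=h_m^k+\lambda u_m^{k+1}=\nabla F_m(x^k)$, while $h_m^{k+1}=h_m^k$ for $m\notin\Omega^k$; this reproduces the two update branches of Algorithm~4. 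Likewise $d^{k+1}=\frac{1}{M}\sum_m d_m^{k+1}=\frac{1}{N}\sum_{m\in\Omega^k}\big(\nabla F_m(x^k)-h_m^k\big)$ matches the averaged direction, $h^{k+1}=h^k+\frac{\lambda}{M}\sum_m u_m^{k+1}=h^k+\frac{N}{M}d^{k+1}$ matches the aggregate update, and since $\rho=1$ and $\mathcal{R}^k=\mathrm{Id}$ the prox step gives $x^{k+1}=\tilde{x}^{k+1}$ unchanged.

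Next I would read off the constants. Proposition~\ref{prop1} gives $\omegac=\frac{M-N}{N}$ and $\oma=\zeta=\frac{M-N}{N(M-1)}$; since $\mathcal{U}_m^k=\mathcal{C}_m^k$ we have $\chic=\omegac$, hence $1+\chic=M/N$, and since $\mathcal{R}^k=\mathrm{Id}$ we have $\nuc=0$, hence $1+\nuc=1$. The prescribed $\lambda=\frac{1}{1+\chic}=N/M$ and $\rho=\frac{1}{1+\nuc}=1$ are exactly those assumed in Corollary~\ref{cor1}, and the definition of $\ac$ and the stepsize bound transfer directly with $\zeta=\oma$. Applying Corollary~\ref{cor1}, the coefficient in its Lyapunov function collapses via $\frac{1+\chic}{1+\nuc}\frac{1}{M}=\frac{M/N}{1}\frac{1}{M}=\frac{1}{N}$ to the stated $\Psi^k$, and its rate specializes through $\frac{\gamma\mmuF}{1+\nuc}=\gamma\mmuF$ and $\frac{1-\bc^{-2}}{1+\chic}=\frac{N(1-\bc^{-2})}{M}$ to $c=1-\min\{\gamma\mmuF,\frac{N(1-\bc^{-2})}{M}\}$.

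Finally, for the complexity I would substitute into the bound $\mathcal{O}\big((\kappa(1+\oma)(1+\nuc)+\chic)\log(1/\epsilon)\big)$ of Corollary~\ref{cor1}: with $\nuc=0$, $\chic=\frac{M-N}{N}=\mathcal{O}(M/N)$, and $\oma=\frac{M-N}{N(M-1)}\leq 1$ (since $M-N\leq N(M-1)$ for $N\geq 1$, so $1+\oma=\mathcal{O}(1)$), this reduces to $\mathcal{O}\big((\kappa+M/N)\log(1/\epsilon)\big)$ iterations, and multiplying by the $N$ gradient calls per iteration gives $\mathcal{O}\big((N\kappa+M)\log(1/\epsilon)\big)$ gradient calls. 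I do not expect a genuine obstacle here, since the argument is pure bookkeeping on top of Corollary~\ref{cor1}; the most error-prone step is the identification, specifically verifying that the $M/N$ scaling of the sampling operator cancels against $\lambda=N/M$ to yield the exact SAGA update $h_m^{k+1}=\nabla F_m(x^k)$ and that $d^{k+1}$ and $h^{k+1}$ land on the correct $1/N$ and $N/M$ normalizations.
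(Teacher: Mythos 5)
Your proposal is correct and follows exactly the paper's own route: the paper proves Theorem~\ref{theo4} by noting that Minibatch-SAGA is MURANA with $\mathcal{U}_m^k=\mathcal{C}_m^k$ set as the $N$-nice sampling operators of Proposition~\ref{prop1}, $\mathcal{R}^k=\mathrm{Id}$, $1+\omegac=\chic+1=\frac{M}{N}$, $\oma=\zeta=\frac{M-N}{N(M-1)}$, $\lambda=\frac{N}{M}$, $\rho=1$, and then applying Corollary~\ref{cor1}. Your substitution of constants (the collapse $\frac{1+\chic}{1+\nuc}\frac{1}{M}=\frac{1}{N}$, the rate, and the complexity bookkeeping) matches the paper, and your explicit verification that the MURANA updates reduce to Algorithm~4 is a correct, slightly more detailed account of the identification the paper asserts without writing out.
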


On a sequential machine without any memory access concern, $N=1$ is the best choice, but a larger $N$ might be better on more complex  architectures with memory caching strategies, or under more specific assumptions on the functions~\citep{gaz19,gow19}.\\

Let us state the convergence result for SAGA, as the particular case $N=1$ in Theorem~\ref{theo4}:

\begin{corollary}[linear convergence of SAGA]\label{cor2}
Choose $\bc>1$. 
 In SAGA, suppose that 
\begin{equation*}
0<\gamma \leq \frac{1}{L}\frac{1}{(1+\bc)^2}.
\end{equation*}
Define the Lyapunov function, for every $k\geq 0$,
\begin{equation}
\Psi^k \eqdef \big\|x^k-x^\star\big\|^2 +  (\bc^2+\bc)\gamma^2\sum_{m=1}^M \sqnorm{h_m^k-h_m^\star}.
\end{equation}
Then, for every $k\geq 0$, we have \ 
$\Exp{ \Psi^{k} }  \leq c^k \Psi^0$,
 where 
 \begin{equation} 
c \eqdef 1-\min\left\{
\gamma \mmuF, \frac{1-\bc^{-2}}{M} \right\}<1.
\end{equation}
Therefore, if $\gamma=\Theta(\frac{1}{L})$, the asymptotic complexity of SAGA to achieve $\epsilon$-accuracy is
\linebreak $\mathcal{O}\big((\kappa+M)\log(1/\epsilon)\big)$ iterations or gradient calls, since there is 1 gradient call per iteration.
\end{corollary}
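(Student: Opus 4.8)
The plan is to derive this corollary as the direct specialization of Theorem~\ref{theo4} to the case $N=1$, since SAGA is precisely Minibatch-SAGA with a single sampled index per iteration. No new convergence machinery is needed: everything reduces to substituting $N=1$ into the quantities $\oma$, $\ac$, the stepsize bound, the Lyapunov function, and the rate $c$ of Theorem~\ref{theo4}, and then checking that each simplifies to the stated form.

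First I would compute the two governing constants. Setting $N=1$ in $\oma\eqdef\frac{M-N}{N(M-1)}$ gives $\oma=\frac{M-1}{M-1}=1$, so the average gain collapses to $1$. Likewise the offset factor becomes $\frac{M-N}{N(M-1)}=1$, so that $\ac\eqdef\max\big(1-(1+\bc)\cdot 1,0\big)=\max(-\bc,0)=0$, using $\bc>1>0$. With $\oma=1$ and $\ac=0$, the stepsize constraint $0<\gamma\leq\frac{1}{L}\frac{1}{\ac+(1+\bc)^2\oma}$ becomes exactly $0<\gamma\leq\frac{1}{L}\frac{1}{(1+\bc)^2}$, matching the corollary. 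The same substitutions turn the Lyapunov function into $\Psi^k=\sqnorm{x^k-x^\star}+(\bc^2+\bc)\gamma^2\sum_{m=1}^M\sqnorm{h_m^k-h_m^\star}$, since $\oma=1$ and the prefactor $\frac{1}{N}$ equals $1$; and it turns the rate into $c=1-\min\{\gamma\mmuF,\frac{1-\bc^{-2}}{M}\}$, as claimed.

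For the complexity estimate I would simply read off the $N=1$ case of the Minibatch-SAGA bound: the iteration complexity $\mathcal{O}\big((\kappa+\frac{M}{N})\log(1/\epsilon)\big)$ becomes $\mathcal{O}\big((\kappa+M)\log(1/\epsilon)\big)$, and because a single gradient is evaluated per iteration when $N=1$, the gradient-call count coincides with the iteration count. I do not expect any genuine obstacle here; the only point requiring a moment of care is verifying that $\ac$ vanishes (rather than taking a small positive value), which is what allows the stepsize denominator to reduce cleanly to $(1+\bc)^2$. Everything else is a mechanical substitution into Theorem~\ref{theo4}, so the proof is essentially a one-line appeal to that theorem followed by the arithmetic above.
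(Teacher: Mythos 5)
Your proposal is correct and matches the paper's own treatment exactly: the paper states Corollary~\ref{cor2} precisely as the particular case $N=1$ of Theorem~\ref{theo4}, and your substitutions ($\oma=1$, $\ac=\max(-\bc,0)=0$, hence the stepsize bound $\frac{1}{L(1+\bc)^2}$, the simplified Lyapunov function, and the rate with $\frac{1-\bc^{-2}}{M}$) are all the computations that specialization requires. Your added care in verifying that $\ac$ vanishes is exactly the right point to check, and the complexity count (one gradient call per iteration, so gradient calls equal iterations) also agrees with the paper.
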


In this result (and in the other ones as well), instead of first choosing $\bc$, one can choose $\gamma$ directly  and set $\bc$ accordingly, such that $\gamma= \frac{1}{L}\frac{1}{(1+\bc)^2}$. This yields:

\begin{corollary}[linear convergence of SAGA]\label{cor3}
 In SAGA, suppose that 
\begin{equation*}
0<\gamma < \frac{1}{4L}.
\end{equation*}
Set $\bc\eqdef\frac{1}{\sqrt{\gamma L}}-1$. 
Define the Lyapunov function, for every $k\geq 0$,
\begin{equation*}
\Psi^k \eqdef \big\|x^k-x^\star\big\|^2 +  (\bc^2+\bc)\gamma^2\sum_{m=1}^M \sqnorm{h_m^k-h_m^\star}.
\end{equation*}
Then, for every $k\geq 0$, we have \ 
$\Exp{ \Psi^{k} }  \leq c^k \Psi^0$, 
 where 
 \begin{equation*} 
c \eqdef 1-\min\left\{
\gamma \mmuF, \frac{1-\bc^{-2}}{M} \right\}<1.
\end{equation*}\end{corollary}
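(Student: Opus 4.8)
The plan is to recognize that Corollary~\ref{cor3} is nothing but a reparametrization of Corollary~\ref{cor2}, in which the roles of $\gamma$ and $\bc$ are exchanged. In Corollary~\ref{cor2} one first picks the free parameter $\bc>1$ and then constrains the stepsize by $0<\gamma\leq\frac{1}{L}\frac{1}{(1+\bc)^2}$. Here, instead, we fix $\gamma$ first and define $\bc$ so that this stepsize constraint is met with equality. So the whole argument should reduce to verifying that the map $\gamma\mapsto\bc\eqdef\frac{1}{\sqrt{\gamma L}}-1$ lands in the admissible range of Corollary~\ref{cor2} and is consistent with its stepsize bound, after which the conclusion is simply inherited.

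First I would check that $\bc>1$. From $\bc=\frac{1}{\sqrt{\gamma L}}-1$ one has $\bc>1\iff\frac{1}{\sqrt{\gamma L}}>2\iff\gamma L<\frac14\iff\gamma<\frac{1}{4L}$, which is exactly the hypothesis; in particular the strict inequality assumed on $\gamma$ corresponds precisely to the strict inequality $\bc>1$ required by Corollary~\ref{cor2}. Next I would compute $(1+\bc)^2=\frac{1}{\gamma L}$, whence $\frac{1}{L}\frac{1}{(1+\bc)^2}=\gamma$, so that the stepsize constraint $0<\gamma\leq\frac{1}{L}\frac{1}{(1+\bc)^2}$ holds, in fact with equality. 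With these two elementary checks in hand, Corollary~\ref{cor2} applies verbatim: its Lyapunov function $\Psi^k$ and its rate $c$ coincide term for term with the ones stated here, since both are written with the same $(\bc^2+\bc)\gamma^2$ weighting and the same expression $c=1-\min\{\gamma\mmuF,\frac{1-\bc^{-2}}{M}\}$. The bound $\Exp{\Psi^{k}}\leq c^k\Psi^0$ and the inequality $c<1$ then follow immediately.

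The proof is therefore essentially immediate, and I do not anticipate a genuine obstacle; the only point requiring care is the bookkeeping of strict versus non-strict inequalities, namely confirming that the open condition $\gamma<\frac{1}{4L}$ maps exactly onto the open condition $\bc>1$, and that the induced stepsize saturates (rather than merely satisfies) the upper bound of Corollary~\ref{cor2}, so that one is applying that result at the extreme admissible value of $\gamma$ for the chosen $\bc$.
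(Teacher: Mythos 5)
Your proposal is correct and matches the paper's own argument: the paper derives Corollary~\ref{cor3} from Corollary~\ref{cor2} precisely by choosing $\gamma$ first and setting $\bc$ so that $\gamma=\frac{1}{L}\frac{1}{(1+\bc)^2}$, which is exactly your reparametrization. Your two verifications --- that $\gamma<\frac{1}{4L}$ is equivalent to $\bc>1$ and that the stepsize bound of Corollary~\ref{cor2} is saturated --- are the only substantive checks needed, and you carried them out correctly.
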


In Theorem 5.6 of \citet{bac21}, Bach gives a rate for SAGA with $\gamma=\frac{1}{4L}$ of $c=1-\min\big(\frac{3\mmuF}{16L},\frac{1}{3M}\big)$. Let us see how our results with the flexible constant $\bc$ make it possible to understand these constants and improve upon them. $\gamma=\frac{1}{4L}$ is not allowed in Corollaries \ref{cor2} and \ref{cor3}. So, let us invoke Theorem~\ref{theo1}, which is more general than Corollary~\ref{cor1}, with $\omegac=\chic=M-1$,  $\lambda=\frac{1}{M}$, $\nuc=0$, $\rho=1$, $\oma=\zeta=1$, $\ac=0$. We choose $\bc=\sqrt{5}-1$ and  $\gamma=\frac{1}{4L}$, so that $\eta=\frac{3}{8}$. Then we get a rate $c=1-\min\big(\frac{3\mmuF}{16L},\frac{1-\bc^{-2}}{M}\big)$, which is slightly better but almost the same as above, since $1-\bc^{-2}\approx 0.345 \approx \frac{1}{3}$. Now, keeping the same value of $\bc$ and choosing $\gamma=\frac{1}{L(1+\bc)^2}=\frac{1}{5L}$, Corollary~\ref{cor2} yields a rate $c=1-\min\big(\frac{\mmuF}{5L},\frac{1-\bc^{-2}}{M}\big)$, which is better, since $\frac{1}{5}>\frac{3}{16}$. On the other hand, choosing $\gamma=\frac{3}{16L}$ in Corollary~\ref{cor3} yields $\bc=\frac{4}{\sqrt{3}}-1$, so that $c=1-\min\big(\frac{3\mmuF}{16L},\frac{1-\bc^{-2}}{M}\big)$,  which is again better, since $1-\bc^{-2}\approx 0.41 > \frac{1}{3}$. Thus, $\gamma=\frac{3}{16L}$ and $\gamma=\frac{1}{5L}$, and every value in between, are uniformly better choices in SAGA than $\gamma=\frac{1}{4L}$, according to our analysis.

\section{Particular case: L-SVRG}\label{seclv}

Like SAGA, SVRG \citep{joh13,zha13} (sometimes called prox-SVRG \citep{xia14} if $R\neq 0$) is a variance-reduced 
randomized  algorithm, well suited to solve \eqref{eqa1}, since it can be up to $M$ times faster than proximal gradient descent. 

\begin{algorithm}[t]
	\caption{Minibatch-L-SVRG (reverts to L-SVRG if $N=1$)}
	\begin{algorithmic}[1]
		\STATE \textbf{input:}  parameter $\gamma>0$, sampling size $N\in[M]$, probability $p\in (0,1]$, initial vector $x^0\in\mathbb{R}^d$		
		\STATE $h^{0}\eqdef\frac{1}{M}\sum_{m=1}^M \nabla F_m(x^0)$, $y^0\eqdef x^0$
		\FOR{$k=0,1,\ldots$}{}
		\STATE Pick $\Omega^k\subset [M]$ of size $N$,  uniformly at random
\STATE $d^{k+1} \eqdef \frac{1}{N}\sum_{m\in\Omega^k} \big( \nabla F_m(x^k) - \nabla F_m(y^k)\big)$
\STATE $x^{k+1}\eqdef\prox_{\gamma R}\big(x^{k}-\gamma (h^k+d^{k+1})\big)$
\STATE $\displaystyle\mbox{Pick randomly }s^k\eqdef \begin{cases}\; 1 & \text{ with probability } \; p \\ \;0 & \text{ with probability } \; 1-p \end{cases}$
\IF{$s^k=1$}
\STATE $h^{k+1}\eqdef\frac{1}{M}\sum_{m=1}^M \nabla F_m(x^k)$
\STATE $y^{k+1}\eqdef x^k$
\ELSE
\STATE $h^{k+1}\eqdef h^k$, $y^{k+1} \eqdef y^k$
\ENDIF
		\ENDFOR
	\end{algorithmic}
\end{algorithm}

Recently, the loopless-SVRG (L-SVRG) algorithm was proposed  by \citet{hof15} and later rediscovered by \citet{kov20}. L-SVRG  is similar to SVRG, but with the outer loop of epochs replaced by a coin flip performed in each iteration, designed to trigger with a small probability, e.g. $1/M$, the computation of the full gradient of $F$. In comparison with SVRG, the analysis of L-SVRG is simpler and L-SVRG is more flexible; for instance,  there is no need to know $\mmuF$ to achieve the $\mathcal{O}\big((\kappa+M)\log(1/\epsilon)\big)$ complexity. 
In SVRG and L-SVRG, in addition to the full gradient passes computed once in a while, two gradients are computed at every iteration. A minibatch version  of L-SVRG, with $N$ instead of 1 gradients picked at every iteration, was called ``L-SVRG with  $\tau$-nice sampling'' by \citet{qia21}, see also \citet{seb19}; we call it Minibatch-L-SVRG, shown as Algorithm~5.

Minibatch-L-SVRG is a particular case of MURANA, with the $\mathcal{C}^k_m$, $m\in[M]$, set as dependent sampling operators like in Proposition~\ref{prop1}, and $\mathcal{R}^k=\mathrm{Id}$, $\rho=1$. Thus, like for Minibatch-SAGA, we have $1+\omegac = \frac{M}{N}$ and $\oma=\zeta=\frac{M-N}{N(M-1)}$.
Let $p \in (0,1]$.  The mappings $\mathcal{U}_m^k$ are all copies of the same random operator $\mathcal{U}^k$, defined by 
\begin{equation*}
\mathcal{U}^k(x) = \begin{cases} \;\frac{1}{p}x & \text{ with probability } \; p \\ 
\;0 & \text{ with probability } \; 1-p \end{cases}.
\end{equation*}
 We have $\chic=\frac{1-p}{p}$ and we set $\lambda=\frac{1}{1+\chic}=p$. We also set $h_m^k=\nabla F_m(y^k)$; these variables  are not stored in Minibatch-L-SVRG, but are computed upon request. 
 Hence, as an application of Corollary~\ref{cor1}, we get:

\begin{theorem}[Linear convergence of Minibatch-L-SVRG]
\label{theo5}
Set $\oma\eqdef\frac{M-N}{N(M-1)}$ and
choose $\bc>1$. Set $\ac\eqdef\max\Big(1-(1+\bc)\frac{M-N}{N(M-1)},0\Big)$. 
In Minibtach-L-SVRG, suppose that 
\begin{equation*}
0<\gamma \leq \frac{1}{L}\frac{1}{\ac+(1+\bc)^2{\oma}}.
\end{equation*}
Define the Lyapunov  function, for every $k\geq 0$,
\begin{equation}
\Psi^k \eqdef \big\|x^k-x^\star\big\|^2 +  (\bc^2+\bc)\gamma^2{\oma} \frac{1}{pM}\sum_{m=1}^M \sqnorm{h_m^k-h_m^\star}.\label{eqll5}
\end{equation}
Then, for every $k\geq 0$, we have \ 
$\Exp{ \Psi^{k} }  \leq c^k \Psi^0$,
 where
\begin{equation}
c \eqdef  1-\min\Big\{ 
\gamma \mmuF,   p(1-\bc^{-2}) \Big\}.\end{equation}
\end{theorem}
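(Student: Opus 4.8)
The plan is to recognize Minibatch-L-SVRG as the instance of MURANA determined by the operators and parameters fixed just before the statement, and then to invoke Corollary~\ref{cor1} directly. The one delicate point in this reduction is the implicit identity $h_m^k = \nabla F_m(y^k)$, since the variables $h_m^k$ are not stored in Algorithm~5. I would establish it by induction on $k$: it holds at $k=0$ by the initialization $y^0 = x^0$, and, assuming it at step $k$, the MURANA update $h_m^{k+1} = h_m^k + \lambda u_m^{k+1}$ with $\lambda = p$ and $u_m^{k+1} = \mathcal{U}^k\big(\nabla F_m(x^k) - \nabla F_m(y^k)\big)$ gives $h_m^{k+1} = \nabla F_m(x^k) = \nabla F_m(y^{k+1})$ when the shared coin lands $s^k = 1$ (since then $y^{k+1} = x^k$), and $h_m^{k+1} = \nabla F_m(y^k) = \nabla F_m(y^{k+1})$ when $s^k = 0$. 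This closes the induction, and averaging over $m$ simultaneously reproduces the explicit updates of $h^k$ in lines~9 and~12 of Algorithm~5.

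With this identification, the MURANA stochastic gradient $d^{k+1} = \frac{1}{M}\sum_m \mathcal{C}_m^k\big(\nabla F_m(x^k) - h_m^k\big)$ built from the $N$-nice sampling operators collapses to $\frac{1}{N}\sum_{m\in\Omega^k}\big(\nabla F_m(x^k) - \nabla F_m(y^k)\big)$, matching line~5; and since $\mathcal{R}^k = \mathrm{Id}$ and $\rho = 1$, the step $x^{k+1} = \prox_{\gamma R}\big(x^k - \gamma(h^k + d^{k+1})\big)$ matches line~6. Thus the two algorithms coincide pathwise.

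It then remains to read off the constants entering Corollary~\ref{cor1}. Proposition~\ref{prop1} supplies the sampling gains $\omegac = \frac{M-N}{N}$ (hence $1 + \omegac = \frac{M}{N}$) and $\oma = \zeta = \frac{M-N}{N(M-1)}$. A one-line computation on the Bernoulli operator gives $\Exp{\mathcal{U}^k(\gv)} = \gv$ and $\Exp{\sqnorm{\mathcal{U}^k(\gv) - \gv}} = \frac{1-p}{p}\sqnorm{\gv}$, so $\chic = \frac{1-p}{p}$ and $\lambda = \frac{1}{1+\chic} = p$, as assumed; and $\mathcal{R}^k = \mathrm{Id}$ forces $\nuc = 0$ and $\rho = \frac{1}{1+\nuc} = 1$. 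Taking $\chic' = \chic$ and $\nuc' = \nuc = 0$ makes $\frac{1+\chic'}{1+\nuc'} = \frac{1}{p}$, so the Lyapunov function of Corollary~\ref{cor1} becomes exactly \eqref{eqll5}; substituting $\nuc = 0$ and $1 + \chic = \frac{1}{p}$ into the rate of Corollary~\ref{cor1} yields $c = 1 - \min\{\gamma\mmuF,\, p(1 - \bc^{-2})\}$, the claimed value. The stepsize condition also matches, since here $\ac = \max\big(1 - (1+\bc)\zeta, 0\big)$ with $\zeta = \oma = \frac{M-N}{N(M-1)}$.

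The main, and only modest, obstacle will be the inductive bookkeeping of the implicit control variate $h_m^k = \nabla F_m(y^k)$ and checking that the single coin flip $s^k$ drives all $M$ operators $\mathcal{U}_m^k$ simultaneously. Beyond that, I would only note that $\Omega^k$ and $s^k$ are drawn afresh and independently across iterations, and that $\mathcal{R}^k = \mathrm{Id}$ is trivially independent from the other operators, so all independence hypotheses of Theorem~\ref{theo1}, and hence of Corollary~\ref{cor1}, are satisfied; the freedom to let $\mathcal{C}_m^k$ and $\mathcal{U}_m^k$ be dependent at a common $k$ is not even needed here.
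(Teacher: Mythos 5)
Your proof is correct and takes essentially the same route as the paper: the paper likewise realizes Minibatch-L-SVRG as the instance of MURANA with dependent $N$-nice sampling operators $\mathcal{C}_m^k$ (Proposition~\ref{prop1}), Bernoulli operators $\mathcal{U}_m^k$ giving $\chic=\frac{1-p}{p}$ and $\lambda=p$, $\mathcal{R}^k=\mathrm{Id}$ with $\rho=1$, and the implicit control variates $h_m^k=\nabla F_m(y^k)$, and then applies Corollary~\ref{cor1} with these constants. Your inductive verification that $h_m^k=\nabla F_m(y^k)$ propagates under the coin flip, and the pathwise matching of the $d^{k+1}$, $x^{k+1}$, $h^{k+1}$ updates, only make explicit the identification the paper states without detail.
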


For instance, with $N=1$, $\bc=\sqrt{6}-1$, 
so that $\ac=0$, 
and $\gamma=\frac{1}{6L}$, we have $c\leq 1-\min \big(\frac{1}{6\kappa},p(1-\bc^{-2})\big)$; since $1-\bc^{-2} \approx 0.52 > \frac{1}{2}$, this is slightly better but very similar to the rate $1-\min (\frac{1}{6\kappa},\frac{p}{2})$ given in Theorem~5 of \citet{kov20}.

Therefore, if $\gamma=\Theta(\frac{1}{L})$, the asymptotic complexity of Minibatch-L-SVRG to achieve $\epsilon$-accuracy is $\mathcal{O}\left((\kappa+\frac{1}{p})\log(1/\epsilon)\right)$ iterations and $\mathcal{O}\left((N\kappa+pM\kappa+\frac{N}{p}+M)\log(1/\epsilon)\right)$ gradient calls, since there are 
$2N+pM$ gradient calls per iteration in expectation. This is the same as Minibatch-SAGA if $p=\Theta(\frac{N}{M})$.

\section{Particular case: ELVIRA (new)}

\begin{algorithm}[t]
	\caption{ELVIRA (new)}
	\begin{algorithmic}[1]
		\STATE \textbf{input:} stepsize $\gamma>0$, sampling size $N\in[M]$, probability $p\in (0,1]$, initial vector $x^0\in\mathbb{R}^d$
		\STATE $h^{0}\eqdef\frac{1}{M}\sum_{m=1}^M \nabla F_m(x^0)$, $y^0\eqdef x^0$
		\FOR{$k=0,1,\ldots$}{}
\STATE  $\mbox{Pick randomly }\displaystyle s^k\eqdef \begin{cases} \;1 & \text{ with probability } \; p \\ \;0 & \text{ with probability } \; 1-p \end{cases}$
\IF{$s^k=1$}
\STATE $h^{k+1}\eqdef\frac{1}{M}\sum_{m=1}^M \nabla F_m(x^k)$
\STATE $x^{k+1}\eqdef\prox_{\gamma R}\big(x^{k}-\gamma h^{k+1}\big)$
\STATE $y^{k+1}\eqdef x^k$
\ELSE
\STATE Pick $\Omega^k\subset [M]$ of size $N$, uniformly at random
\STATE $d^{k+1}\eqdef\frac{1}{N}\sum_{m\in\Omega^k}  \left(\nabla F_m(x^k) -\nabla F_m(y^k)\right)$
\STATE $x^{k+1}\eqdef\prox_{\gamma R}\big(x^{k}-\gamma (h^k+d^{k+1})\big)$
\STATE $h^{k+1}\eqdef h^k$, $y^{k+1} \eqdef y^k$
\ENDIF
		\ENDFOR
	\end{algorithmic}
\end{algorithm}

It  is a pity not to use the full gradient in L-SVRG to update $x^k$, when it is computed. And even with $p=1$, which means the full gradient computed at every iteration, L-SVRG does not revert to proximal gradient descent. We correct these drawbacks by proposing a new algorithm, called ELVIRA, shown as Algorithm~6. The novelty is that whenever a full gradient pass is computed, it is used just after to update the estimate $x^{k+1}$ of the solution. 

ELVIRA is a particular case of MURANA as follows: $\mathcal{R}^k=\mathrm{Id}$, $\rho=1$, and the $\mathcal{U}_m^k$ are set like in Minibatch-L-SVRG. The $\mathcal{C}_m^k$ depend on the $\mathcal{U}_m^k$ and are set as follows: if 
the full gradient is not computed, $\mathcal{C}_m^k$ are sampling operators like in Proposition~\ref{prop1}, Minibatch-L-SVRG and Minibatch-SAGA. Otherwise, the $\mathcal{C}_m^k$ are set to the identity. 

We have $\chic=\frac{1-p}{p}$ and we set $\lambda=\frac{1}{1+\chic}=p$. Moreover, $\oma=\zeta=\frac{M-N}{N(M-1)}(1-p)$. For instance, if $N=1$ and $p=\frac{1}{M}$, we have ${\oma}=\zeta=\frac{M-1}{M}$, instead of ${\oma}=\zeta=1$ with L-SVRG. Like in L-SVRG, we set $h_m^k=\nabla F_m(y^k)$; these variables  are not stored and are computed upon request. 

Hence, as an application of Corollary~\ref{cor1}, we get:

\begin{theorem}[Linear convergence of ELVIRA]
\label{theo6}
Set $\oma\eqdef\frac{M-N}{N(M-1)}(1-p)$ and
choose $\bc>1$.  Set $\ac\eqdef\max\Big(1-(1+\bc)(1-p)\frac{M-N}{N(M-1)},0\Big)$. 
In ELVIRA, suppose that
\begin{equation*}
0<\gamma \leq \frac{1}{L}\frac{1}{\ac+(1+\bc)^2{\oma}}.
\end{equation*}
Define the Lyapunov function, for every $k\geq 0$,
\begin{equation}
\Psi^k \eqdef \big\|x^k-x^\star\big\|^2 +  (\bc^2+\bc)\gamma^2{\oma} \frac{1}{pM}\sum_{m=1}^M \sqnorm{h_m^k-h_m^\star}.\label{eqll6}
\end{equation}
Then, for every $k\geq 0$, we have \ 
$\Exp{ \Psi^{k} }  \leq c^k \Psi^0$,
 where
\begin{equation}c \eqdef 1-\min\Big\{ 
\gamma \mmuF
,  p(1-\bc^{-2}) \Big\}.
\end{equation}\end{theorem}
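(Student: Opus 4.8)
The plan is to recognize ELVIRA as a particular instance of MURANA and then simply invoke Corollary~\ref{cor1}, so that the whole argument reduces to two verifications: (a) that Algorithm~6 is exactly Algorithm~1 under the prescribed operator assignments, and (b) the computation of the four constants $\chic$, $\nuc$, $\oma$, $\zeta$ that feed into the corollary. The theorem statement is already flagged as an application of Corollary~\ref{cor1}, so no fresh Lyapunov analysis is needed.

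For step (a), I would set $\mathcal{R}^k=\mathrm{Id}$, $\rho=1$, take the $\mathcal{U}_m^k$ as copies of the Bernoulli operator $\mathcal{U}^k$ used in Minibatch-L-SVRG, and couple the $\mathcal{C}_m^k$ to the same coin $s^k$: identity when $s^k=1$, and $N$-nice sampling (Proposition~\ref{prop1}) when $s^k=0$. Using the virtual identity $h_m^k=\nabla F_m(y^k)$, so that $\nabla F_m(x^k)-h_m^k=\nabla F_m(x^k)-\nabla F_m(y^k)$, I would trace the two branches of MURANA. When $s^k=0$: each $\mathcal{U}_m^k$ returns $0$, hence $h_m^{k+1}=h_m^k$, $h^{k+1}=h^k$, $y^{k+1}=y^k$, while $\mathcal{C}_m^k$ yields $d^{k+1}=\frac{1}{N}\sum_{m\in\Omega^k}(\nabla F_m(x^k)-\nabla F_m(y^k))$; this reproduces the \texttt{else} branch of Algorithm~6. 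When $s^k=1$: $\mathcal{U}_m^k(\gv_m)=\frac{1}{p}\gv_m$ together with $\lambda=p$ gives $h_m^{k+1}=\nabla F_m(x^k)$, i.e.\ $y^{k+1}=x^k$ and $h^{k+1}=\frac{1}{M}\sum_m\nabla F_m(x^k)$, while $\mathcal{C}_m^k=\mathrm{Id}$ gives $d^{k+1}=h^{k+1}-h^k$, so $x^{k+1}=\prox_{\gamma R}(x^k-\gamma h^{k+1})$; this reproduces the \texttt{if} branch. This confirms the identification.

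For step (b), $\nuc=0$ is immediate from $\mathcal{R}^k=\mathrm{Id}$, and each $\mathcal{U}_m^k$ satisfies the marginal second-moment bound with $\chic=\frac{1-p}{p}$ exactly as in L-SVRG, so that $\lambda=p=\frac{1}{1+\chic}$ and $\rho=1=\frac{1}{1+\nuc}$ match the equalities required by Corollary~\ref{cor1}. The substantive computation is the average gain and offset of the $s^k$-coupled $\mathcal{C}_m^k$. Here I would condition on $s^k$ in \eqref{eqbo}: on the event $\{s^k=1\}$ the operators are the identity, so the left-hand side vanishes; on $\{s^k=0\}$, Proposition~\ref{prop1} bounds it by $\frac{\omega_0}{M}\sum_m\sqnorm{\gv_m}-\zeta_0\sqnorm{\frac{1}{M}\sum_m\gv_m}$ with $\omega_0=\zeta_0=\frac{M-N}{N(M-1)}$. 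Taking total expectation and weighting by $p$ and $1-p$ yields $\oma=\zeta=(1-p)\frac{M-N}{N(M-1)}$, matching the theorem.

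It then remains to feed these constants into Corollary~\ref{cor1}. Since $\frac{1+\chic}{1+\nuc}=\frac{1}{p}$, the memory term in \eqref{eqll1} becomes $(\bc^2+\bc)\gamma^2\oma\frac{1}{pM}\sum_m\sqnorm{h_m^k-h_m^\star}$, reproducing \eqref{eqll6}; the stepsize bound and the constant $\ac=\max(1-(1+\bc)\zeta,0)$ specialize directly; and the rate collapses to $c=1-\min\{\gamma\mmuF,\,p(1-\bc^{-2})\}$ because $\frac{\gamma\mmuF}{1+\nuc}=\gamma\mmuF$ and $\frac{1-\bc^{-2}}{1+\chic}=p(1-\bc^{-2})$. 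I expect the main obstacle to be bookkeeping rather than analysis: one must check that the $s^k$-coupling of $\mathcal{C}_m^k$ and $\mathcal{U}_m^k$ is compatible with the hypotheses of Theorem~\ref{theo1} (the framework explicitly permits dependence between the two operator families at a fixed $k$, and the mixture of identity and sampling is unbiased, so \eqref{eqc1} holds), and that the non-stored variables $y^k,h_m^k$ obey the virtual recursion $h_m^k=\nabla F_m(y^k)$ consistently across both branches.
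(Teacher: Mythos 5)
Your proposal is correct and follows exactly the paper's route: the paper also obtains Theorem~\ref{theo6} by identifying ELVIRA as MURANA with $\mathcal{R}^k=\mathrm{Id}$, $\rho=1$, Bernoulli operators $\mathcal{U}_m^k$ (so $\chic=\frac{1-p}{p}$, $\lambda=p$), $s^k$-coupled $\mathcal{C}_m^k$ (identity or $N$-nice sampling, giving $\oma=\zeta=(1-p)\frac{M-N}{N(M-1)}$), and the virtual variables $h_m^k=\nabla F_m(y^k)$, and then invoking Corollary~\ref{cor1}. Your write-up merely makes explicit the branch-by-branch identification and the conditioning-on-$s^k$ mixture argument for $\oma$ and $\zeta$, which the paper states without detail; both verifications are correct.
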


For instance, with $N=1$, $\bc=\sqrt{6}-1$ 
and $\gamma=\frac{1}{6L}$, we have $c\leq 1-\min (\frac{1}{6\kappa},\frac{p}{2})$, like for L-SVRG. But for $N=1$ and a given $\bc>1$, the interval for $\gamma$ is slightly larger in ELVIRA than in L-SVRG. In other words, for a given $\gamma<\frac{1}{4L}$, one can choose a  larger value of $\bc$, yielding a smaller rate $c$.

Therefore, if $\gamma=\Theta(\frac{1}{L})$, the complexity of ELVIRA 
is $\mathcal{O}\left((\kappa+\frac{1}{p})\log(1/\epsilon)\right)$ iterations and $\mathcal{O}\left((N\kappa+pM\kappa+\frac{N}{p}+M)\log(1/\epsilon)\right)$ gradient calls, since there are $2N(1-p)+pM$ gradient calls per iteration in expectation. If in addition $p=\Theta(\frac{N}{M})$,
the complexity becomes $\mathcal{O}\left((\kappa+\frac{M}{N})\log(1/\epsilon)\right)$ iterations and $\mathcal{O}\big((N\kappa+M)\log(1/\epsilon)\big)$ gradient calls. 

So,  the asymptotic complexity of ELVIRA is the same as that of Minibatch-L-SVRG, and it has the same low-memory requirements.  But in practice, one can expect ELVIRA to be a bit faster, because its variance is strictly lower. This is illustrated by experiments in Appendix~\ref{secex}. 
ELVIRA reverts to proximal gradient descent if $p=1$ or $N=M$.

\section{Conclusion}

We have proposed a general framework for iterative algorithms minimizing a sum of functions by making calls to unbiased stochastic estimates of their gradients, and featuring variance-reduction mechanisms learning the optimal gradients. Our generic template algorithm MURANA allows us to study existing algorithms and design new ones within a unified analysis. Sampling among functions, compression of the vectors sent in both directions in distributed settings, e.g. by sparsification or quantization, as well as partial participation of the workers, which are of utmost importance in modern distributed and federated learning settings, are all features covered by our framework. In future work, we plan to exploit our findings to design new algorithms tailored to specific applications, and to investigate the following questions:
\begin{enumerate}
\item Can we relax the strong convexity assumption and still guarantee linear convergence of MURANA? For instance, in \citet{con22}, linear convergence of DIANA under a Kurdyka--{\L}ojasiewicz assumption has been proved.
\item Can we relax the unbiasedness assumption of the stochastic estimation processes? In \citet{con22}, a new class of possibly biased and random compressors is introduced, and linear convergence of DIANA with them is proved. 
\item  Can we prove last-iterate convergence as well as a sublinear rate for MURANA when the problem is convex but not strongly convex? And in the nonconvex setting? 
\item Can we extend the setting of stochastic gradients with variance-reduction mechanisms to other algorithms than proximal gradient descent, like primal--dual algorithms for optimization problems involving several nonsmooth terms~\citep{com21,con19,con22a}? An approach of this type has been proposed in \citet{sal20}, based on another proof technique with the Lagrangian gap, and it would be interesting to combine the two approaches. For instance, can we derive an algorithm like MURANA-D for decentralized optimization, and not only for the client-server setting, similar to the DESTROY algorithm in \citet{sal20}?
\end{enumerate}

	\bibliography{ieeeabrv,biblio2}
	
	\newpage

\appendix

\section{Proof of Proposition~\ref{prop1}}\label{appena}

The first statement with the value of $\omegac$ follows from
\[
\Exp{ \sqnorm{ \mathcal{C}_m^k(\gv)-\gv } } =\frac{N}{M}\left(\frac{M}{N}-1\right)^{\!2} \sqnorm{\gv_m} + \frac{M-N}{M}\sqnorm{\gv_m} 
=\frac{M-N}{N}\sqnorm{\gv_m}.
\]\
Let us establish the second statement with the values of $\oma$ and $\zeta$.
We start with the identity, where $\mathbb{E}_{\Omega^k}$ denotes expectation with respect to the random set $\Omega^k$:
\begin{align*}
\Exp{ \sqnorm{ \sum_{m=1}^M \big(\mathcal{C}^k_m(\gv_m)-\gv_m\big) } } &= \mathbb{E}_{\Omega^k}\left[ \sqnorm{ \sum_{m\in \Omega^k} \frac{M}{N}\gv_{m} 
- \sum_{m=1}^M  \gv_m } \right] \\
&= \frac{M^2}{N^2} \mathbb{E}_{\Omega^k}\left[ \sqnorm{ \sum_{m\in \Omega^k}\gv_{m} } \right]+\sqnorm{ \sum_{m=1}^M \gv_{m} } \\
&\quad- \frac{2M}{N}\mathbb{E}_{\Omega^k}\left[\left\langle \sum_{m\in \Omega^k}\gv_{m}, \sum_{m=1}^M \gv_{m}\right\rangle \right]\\
&=\frac{M^2}{N^2} \mathbb{E}_{\Omega^k}\left[\sum_{m\in \Omega^k} \sqnorm{ \gv_{m} } \right]+
\frac{M^2}{N^2} \mathbb{E}_{\Omega^k}\left[\sum_{m\in \Omega^k}\sum_{m'\in \Omega^k, \neq m}\langle \gv_m,\gv_{m'}\rangle\right]\\
&\quad  - \sqnorm{ \sum_{m=1}^M \gv_{m} } .
\end{align*}
By computing the expectations on the right hand side, we finally get:
\begin{align*}
\Exp{ \sqnorm{ \sum_{m=1}^M \big(\mathcal{C}^k_m(\gv_m)-\gv_m\big) } }  &=\frac{M}{N} \sum_{m=1}^M\sqnorm{\gv_m}+
\frac{M(N-1)}{N(M-1)} \sum_{m=1}^M\sum_{m'=1, \neq m}^M\langle \gv_m,\gv_{m'}\rangle -
\sqnorm{ \sum_{m=1}^M \gv_m } \\
&=\frac{M}{N} \left(1-\frac{N-1}{M-1}\right) \sum_{m=1}^M\sqnorm{\gv_m}+
\left(\frac{M(N-1)}{N(M-1)}-1\right) 
\sqnorm{ \sum_{m=1}^M \gv_m } \\
&=\frac{M}{N} \frac{M-N}{M-1} \sum_{m=1}^M \sqnorm{ \gv_m }-\frac{M-N}{N(M-1)}
\sqnorm{ \sum_{m=1}^M \gv_m } .
\end{align*}
\hfill$\square$

\section{Proof of Proposition~\ref{prop2}}\label{appenb}

We have, for every $m\in[M]$ and $\gv_m\in\mathbb{R}^d$,
\begin{equation*}
\Exp{ \mathcal{C}_m'(\mathcal{C}_m(\gv_m))\;|\; \mathcal{C}_m(\gv_m) } =\mathcal{C}_m(\gv_m),
\end{equation*}
where the bar denotes conditional expectation, so that 
$
\Exp{ \mathcal{C}_m'(\mathcal{C}_m(\gv_m)) } =\gv_m,
$
and
\begin{equation*}
\Exp{ \sqnorm{ \mathcal{C}_m'(\mathcal{C}_m(\gv_m)) } \;|\; \mathcal{C}_m(\gv_m)} \leq (1+\omegac') \sqnorm{ \mathcal{C}_m(\gv_m) },
\end{equation*}
so that 
$
\Exp{ \sqnorm{ \mathcal{C}_m'(\mathcal{C}_m(\gv_m)) } }\leq (1+\omegac')\Exp{ \sqnorm{ \mathcal{C}_m(\gv_m) } } \leq (1+\omegac')(1+\omegac)\sqnorm{ \gv_m }.
$
Hence, 
\begin{equation*}
\Exp{ \sqnorm{ \mathcal{C}_m'(\mathcal{C}_m(\gv_m))-\gv_m } } \leq \left((1+\omegac')(1+\omegac)-1\right)\sqnorm{ \gv_m }.
\end{equation*}

Moreover, for every $\gv_m\in\mathbb{R}^d$, $m=1,\ldots,M$, 
\begin{equation*}
\Exp{ \sqnorm{ \frac{1}{M}\sum_{m=1}^M \mathcal{C}_m'\big(\mathcal{C}_m(\gv_m)\big) } \;\Big|\; \big(\mathcal{C}_m(\gv_m)\big)_{m=1}^M } \leq 
(1-\zeta')\sqnorm{ \frac{1}{M}\sum_{m=1}^M \mathcal{C}_m(\gv_m)}+\frac{\oma'}{M} \sum_{m=1}^M \sqnorm{\mathcal{C}_m(\gv_m)},
\end{equation*}
so that 
\begin{align*}
\Exp{ \sqnorm{ \frac{1}{M}\sum_{m=1}^M \mathcal{C}_m'\big(\mathcal{C}_m(\gv_m)\big) } } &  \leq 
(1-\zeta')\sqnorm{ \frac{1}{M}\sum_{m=1}^M \gv_m } +(1-\zeta') \Exp{ \sqnorm{ \frac{1}{M}\sum_{m=1}^M \big(\mathcal{C}_m(\gv_m)-\gv_m\big) } } \\
& \quad +\frac{\oma'}{M}(1+\omegac) \sum_{m=1}^M \sqnorm{ \gv_m }.
\end{align*}
Thus, if the $(\mathcal{C}_m)_{m=1}^M$ are mutually independent, 
\begin{align*}
\Exp{\sqnorm{ \frac{1}{M}\sum_{m=1}^M \mathcal{C}_m'\big(\mathcal{C}_m(\gv_m)\big) } } &  \leq 
(1-\zeta')\sqnorm{ \frac{1}{M}\sum_{m=1}^M \gv_m } + (1-\zeta')\frac{\omegac}{M^2}\sum_{m=1}^M \sqnorm{\gv_m}\\
&\quad+\frac{\oma'}{M}(1+\omegac) \sum_{m=1}^M \sqnorm{\gv_m},
\end{align*}
so that 
\begin{align*}
\Exp{ \sqnorm{ \frac{1}{M}\sum_{m=1}^M \Big(\mathcal{C}_m'\big(\mathcal{C}_m(\gv_m)\big)-\gv_m\Big) } } &  \leq 
\left(\frac{\omegac}{M}(1-\zeta')+\oma'(1+\omegac)
\right)\frac{1}{M}\sum_{m=1}^M \sqnorm{ \gv_m }\\
&\quad-\zeta'\sqnorm{ \frac{1}{M}\sum_{m=1}^M \gv_m } .
\end{align*}
\hfill$\square$

\section{Proof of Theorem~\ref{theo1}}\label{sec7}

Let us place ourselves in the conditions of Theorem~\ref{theo1}.
We define $h^\star \eqdef  \nabla F(x^\star)$ and $w^\star \eqdef x^\star-\gamma h^\star$. We have $x^\star=\prox_{\gamma R}(w^\star)$.

Let $k\in\mathbb{N}$. We have,  conditionally on $x^k$, $h^k$ and  $(h_m^k)_{m=1}^M$: $\mathbb{E}[\mathcal{R}^k(\tilde{x}^{k+1}-x^k)]=\tilde{x}^{k+1}-x^k$. Thus, using also \eqref{eq6} and the fact that $\nuc\leq \nuc'$,
\begin{align*}
\Exp{ \sqnorm{ x^{k+1}-x^\star } }& \leq \sqnorm{ (1-\rho)(x^{k}-x^\star)+\rho  (\tilde{x}^{k+1}-x^\star) } + \rho^2\nuc' \sqnorm{ \tilde{x}^{k+1}-x^{k} }\\
& \leq \left((1-\rho)^2+\rho^2\nuc'\right) \sqnorm{x^{k}-x^\star} + \rho^2 (1+\nuc') \sqnorm{ \tilde{x}^{k+1}- x^\star }\notag\\
&\quad + 2\rho\left(1-\rho(1+\nuc')\right) \big\langle x^{k}-x^\star,\tilde{x}^{k+1}-x^\star \big\rangle.\notag
\end{align*}
Thus, with $\rho=1/(1+\nuc')$,
\begin{equation}
\Exp{ \sqnorm{ x^{k+1}-x^\star } }
\leq \frac{\nuc'}{1+\nuc'} \sqnorm{ x^{k}-x^\star } + \frac{1}{1+\nuc'}\sqnorm{ \tilde{x}^{k+1}-x^\star }.\label{eq34}
\end{equation}
Moreover, using nonexpansiveness of the proximity operator and the fact that $\mathbb{E}[d^{k+1}]=\nabla F(x^k)-h^k$,
\begin{align*}
\Exp{ \sqnorm{ \tilde{x}^{k+1}-x^\star } } &\leq \Exp{ \sqnorm{ x^{k}-\gamma (d^{k+1}+h^{k})-w^\star } } \\
& = \Exp{ \sqnorm{ x^k-x^\star -\gamma(d^{k+1}+h^k-h^\star)} }\\
& = \Exp{ \sqnorm{ x^k-x^\star -\gamma\big(\nabla F(x^k)-\nabla F(x^\star)\big)} }+ \Exp{ \sqnorm{ d^{k+1}-\mathbb{E}[d^{k+1}]}}.
\end{align*}
We have $d^{k+1}=\frac{1}{M}\sum_{m=1}^M \mathcal{C}_m^k\big(\nabla F_m(x^k)-h_m^{k}\big)$. So, using \eqref{eqbo},
\begin{align*}
\Exp{ \sqnorm{ \tilde{x}^{k+1}-x^\star } }& \leq \sqnorm{ (\mathrm{Id}-\gamma \nabla F)x^k-(\mathrm{Id}-\gamma \nabla F)x^\star } + \frac{\gamma^2{\oma}}{M}\sum_{m=1}^M \sqnorm{ \nabla F_m(x^k)-h_m^k }\\
&\quad-\gamma^2\zeta \sqnorm{\nabla F(x^k)-h^k}\\
& = \sqnorm{ (\mathrm{Id}-\gamma \nabla F)x^k-(\mathrm{Id}-\gamma \nabla F)x^\star } 
+\gamma^2(\oma-\zeta) \sqnorm{\nabla F(x^k)-h^k}\\
&\quad + \frac{\gamma^2{\oma}}{M}\sum_{m=1}^M \sqnorm{ \nabla F_m(x^k)-h_m^k - \nabla F(x^k)+h^k},
\end{align*}
where we used the fact that for every vectors $\gv_m$, $m=1,\ldots,M$, $\frac{1}{M}\sum_{m=1}^{M} \|\gv_m\|^2=\frac{1}{M}\sum_{m=1}^{M} \|\gv_m-\gv\|^2+\|\gv\|^2$, where $\gv=\frac{1}{M}\sum_{m=1}^{M}\gv_m$. Now, we will use the fact that $\oma-\zeta\geq 0$ and the Peter--Paul inequality, according to which, for every $\gv\in\mathbb{R}^d$ and $\gv'\in\mathbb{R}^d$,  $\sqnorm{\gv+\gv'}\leq \left(1+\frac{1}{\bc}\right)\sqnorm{\gv} + (1+\bc)\sqnorm{ \gv'}$. Thus,\\
\begin{align*}
\Exp{ \sqnorm{ \tilde{x}^{k+1}-x^\star } } 
& \leq \sqnorm{ (\mathrm{Id}-\gamma \nabla F)x^k-(\mathrm{Id}-\gamma \nabla F)x^\star } 
+\left(1+\frac{1}{\bc}\right)  \gamma^2(\oma-\zeta) \sqnorm{ h^k-h^\star}\\
&\quad+ (1+\bc) \gamma^2(\oma-\zeta) \sqnorm{ \nabla F(x^k)-\nabla F(x^\star)}\\
&  \quad+ \left(1+\frac{1}{\bc}\right)\gamma^2{\oma}\frac{1}{M}\sum_{m=1}^M \sqnorm{ h_m^k- h_m^\star -h^k+h^\star}\\
&  \quad+ (1+\bc)\gamma^2{\oma}\frac{1}{M}\sum_{m=1}^M \sqnorm{ \nabla F_m(x^k)-\nabla F_m(x^\star) -\nabla F(x^k)+\nabla F(x^\star) }\\
& \leq \sqnorm{ (\mathrm{Id}-\gamma \nabla F)x^k-(\mathrm{Id}-\gamma \nabla F)x^\star } - (1+\bc) \gamma^2\zeta \sqnorm{ \nabla F(x^k)-\nabla F(x^\star)}\\
&  \quad+ \left(1+\frac{1}{\bc}\right)\gamma^2{\oma}\frac{1}{M}\sum_{m=1}^M \sqnorm{ h_m^k- h_m^\star }\\
&  \quad+ (1+\bc)\gamma^2{\oma}\frac{1}{M}\sum_{m=1}^M \sqnorm{ \nabla F_m(x^k)-\nabla F_m(x^\star)  }\\
& = \sqnorm{x^k-x^\star}  -2\gamma  \big\langle x^k-x^\star, \nabla F(x^k)-\nabla F(x^\star) \big\rangle \\
&\quad +\gamma^2\big(1- (1+\bc) \zeta\big) \sqnorm{ \nabla F(x^k)-\nabla F(x^\star)}\\
&  \quad+ \left(1+\frac{1}{\bc}\right)\gamma^2{\oma}\frac{1}{M}\sum_{m=1}^M \sqnorm{ h_m^k- h_m^\star }\\
&  \quad+ (1+\bc)\gamma^2{\oma}\frac{1}{M}\sum_{m=1}^M \sqnorm{ \nabla F_m(x^k)-\nabla F_m(x^\star)  }\\
& \leq \sqnorm{x^k-x^\star}  -2\gamma  \big\langle x^k-x^\star, \nabla F(x^k)-\nabla F(x^\star) \big\rangle \\
&\quad + \left(1+\frac{1}{\bc}\right)\gamma^2{\oma}\frac{1}{M}\sum_{m=1}^M \sqnorm{ h_m^k- h_m^\star }\\
&  \quad+ \gamma^2\Big(\max\big(1-(1+\bc) \zeta,0\big)+(1+\bc){\oma}\Big)\frac{1}{M}\sum_{m=1}^M \sqnorm{ \nabla F_m(x^k)-h_m^\star },
\end{align*}
where we used the fact that if the constant in front of $\|\nabla F(x^k)-\nabla F(x^\star)\|^2$ is negative, we can ignore this term, whereas if it positive, we have to upper bound it.

In addition, 
\begin{align*}
\langle x^k-x^\star, \nabla F(x^k)-\nabla F(x^\star)\rangle &= \eta \langle x^k-x^\star, \nabla F(x^k)-\nabla F(x^\star)\rangle \\
& \quad+  (1-\eta) \frac{1}{M}\sum_{m=1}^M \langle x^k-x^\star,\nabla F_m(x^k)-\nabla F_m(x^\star)\rangle.
\end{align*}
By $\mmuF$-strong convexity of $F$, $\nabla F-\mmuF \mathrm{Id}$ is monotone, so that $\langle x^k-x^\star, \nabla F(x^k)-\nabla F(x^\star)\rangle \geq \mmuF \|x^k-x^\star\|^2$. Also, by cocoercivity of the gradient, for every $m\in[M]$,  $ \langle x^k-x^\star, \nabla F_m(x^k)-\nabla F_m(x^\star)\rangle\geq \frac{1}{L} \| \nabla F_m(x^k)-\nabla F_m(x^\star)\|^2$. 
So,
\begin{align*}
\langle x^k-x^\star, \nabla F(x^k)-\nabla F(x^\star)\rangle& \geq \eta \mmuF  \sqnorm{x^k-x^\star}+ (1-\eta) \frac{1}{L}  \frac{1}{M}\sum_{m=1}^M \sqnorm{ \nabla F_m(x^k)-\nabla F_m(x^\star) }.\end{align*}
Hence, using the definition of $\ac$,
\begin{align*}
\Exp{ \sqnorm{ \tilde{x}^{k+1}-x^\star } }& \leq \left(
1-2\gamma \eta \mmuF 
\right)\sqnorm{x^k-x^\star} + \left(1+\frac{1}{\bc}\right)\gamma^2{\oma}\frac{1}{M}\sum_{m=1}^M \sqnorm{h_m^k- h_m^\star}\\
&\hspace{-1cm}+ \left(
\gamma^2\Big(\ac+(1+\bc){\oma}\Big)-2\gamma(1-\eta) \frac{1}{L}\right)\!\frac{1}{M}\sum_{m=1}^M \sqnorm{ \nabla F_m(x^k)-h_m^\star }
\end{align*}
and, by combination with \eqref{eq34},
\begin{align*}
\Exp{ \sqnorm{ x^{k+1}-x^\star } }& \leq \left(
1-\frac{2\gamma\eta \mmuF}{1+\nuc'}
\right)\sqnorm{x^k-x^\star}+ \left(1+\frac{1}{\bc}\right)\frac{\gamma^2{\oma}}{1+\nuc'}\frac{1}{M}\sum_{m=1}^M \sqnorm{h_m^k- h_m^\star}\\
&  \quad+\frac{1}{1+\nuc'} \left(
\gamma^2\big(\ac+(1+\bc){\oma}\big)-2\gamma(1-\eta) \frac{1}{L}\right)\!\frac{1}{M}\sum_{m=1}^M \sqnorm{ \nabla F_m(x^k)-h_m^\star }.
\end{align*}

On the other hand, 
conditionally on $x^k$, $h^k$, and $(h_m^k)_{m=1}^M$, we have, for every $m\in[M]$,
\begin{align*}
\Exp{ \sqnorm{ h_m^{k+1}-h_m^\star } }& \leq \sqnorm{ (1-\lambda)(h_m^{k}-h_m^\star)+\lambda  \big(\nabla F_m(x^k)-h_m^\star\big) } + \lambda^2\chic' \sqnorm{ \nabla F_m(x^k)-h_m^{k}}\\
& \leq \left((1-\lambda)^2+\lambda^2\chic'\right) \sqnorm{ h_m^{k}-h_m^\star } + \lambda^2 (1+\chic')\sqnorm{ \nabla F_m(x^k)-h_m^\star } \notag\\
&\quad + 2\lambda\big(1-\lambda(1+\chic')\big) \big\langle h_m^{k}-h_m^\star,\nabla F_m(x^k)-h_m^\star\big\rangle.\notag
\end{align*}
Thus, with $\lambda=1/(1+\chic')$,
\begin{align*}
\Exp{ \sqnorm{ h_m^{k+1}-h_m^\star } }
&\leq \frac{\chic'}{1+\chic'} \sqnorm{ h_m^{k}-h_m^\star } + \frac{1}{1+\chic'}\sqnorm{ \nabla F_m(x^k)-h_m^\star }.\notag
\end{align*}

Thus, 
conditionally on $x^k$, $h^k$, and  $(h_m^k)_{m=1}^M$,
\begin{align*}
\Exp{ \Psi^{k+1} }& \leq \left(
1-\frac{2\gamma\eta\mmuF}{1+\nuc'} 
\right)\sqnorm{x^k-x^\star}+  \frac{1+\bc^2\chic'}{\bc^2(1+\chic')}(\bc^2+\bc)\gamma^2{\oma}\frac{1+\chic'}{1+\nuc'}\frac{1}{M}\sum_{m=1}^M \sqnorm{h_m^k- h_m^\star}\\
&  \quad+\frac{1}{1+\nuc'} \left(
\gamma^2\big(\ac+(1+\bc)^2{\oma}\big)-2\gamma(1-\eta) \frac{1}{L}\right)\frac{1}{M}\sum_{m=1}^M \sqnorm{ \nabla F_m(x^k)-h_m^\star }.
\end{align*}
By definition of $\eta$, $\gamma =  \frac{2(1-\eta)}{L}\frac{1}{\ac+(1+\bc)^2{\oma}}$, so that the last term above is zero and
\begin{align*}
\Exp{ \Psi^{k+1} }& \leq \left(
1-\frac{2\gamma\eta\mmuF}{1+\nuc'} 
\right)\sqnorm{x^k-x^\star}+  \frac{1+\bc^2\chic'}{\bc^2(1+\chic')}(\bc^2+\bc)\gamma^2{\oma}\frac{1+\chic'}{1+\nuc'}\frac{1}{M}\sum_{m=1}^M \sqnorm{h_m^k- h_m^\star}\\
&  \leq c \Psi^k,
\end{align*}
where
\begin{align*}
c = \max\left\{ 1-\frac{2\gamma\eta\mmuF}{1+\nuc'}
,{  \frac{\bc^{-2}+\chic'}{1+\chic'}}\right \}.
\end{align*}
Since $\bc>1$, we have $c<1$. 

Finally, iterating the tower rule on the conditional expectations, we have, for every $k\geq 0$, 
\begin{equation*}
\Exp{ \Psi^{k} } \leq c^k \Psi^{0}.
\end{equation*}
\hfill$\square$

\section{Experiments}\label{secex}

\begin{figure}[t]
\centering
\includegraphics[scale=0.75]{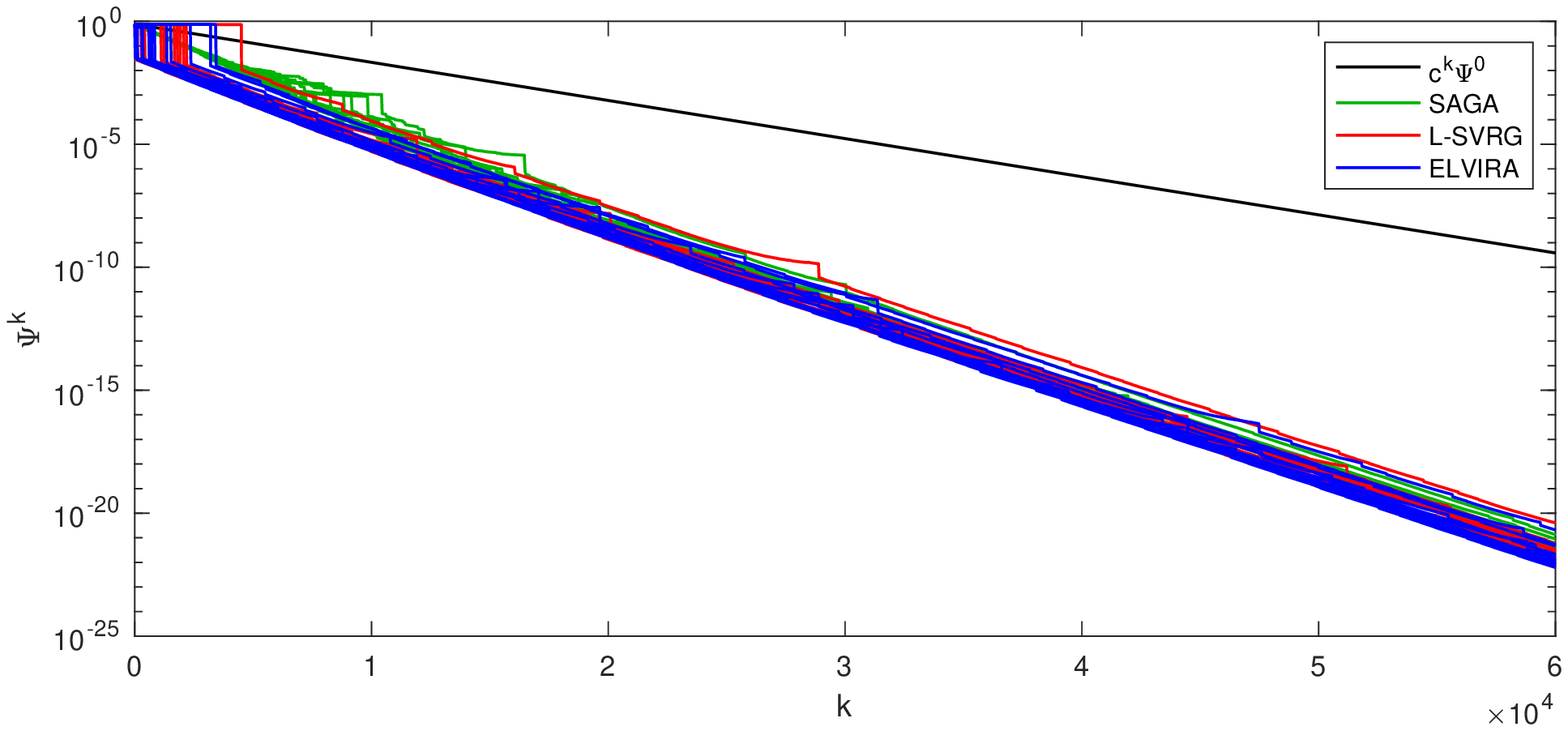}
\caption{Convergence plots for a synthetic experiment with quadratic functions, with 15 different runs for each stochastic algorithm. }
\end{figure}

We compare SAGA, L-SVRG and ELVIRA on the same synthetic problem of minimizing over $\mathbb{R}^d$ the average of $M=1000$ functions $F_m$, with $d=100$; that is, Problem \eqref{eqa1} with $R=0$. Every function $F_m$ is quadratic: $F_m : x \mapsto  \frac{1}{2}\|A_m x -b_m\|^2$ for some matrix $A_m$ of size $d' \times d$ and vector $b_m \in \mathbb{R}^{d'}$, all made of independent random values drawn from the uniform distribution in $[0,1]$, with $d'=5$. Since $d'< d$, none of the $F_m$ is strongly convex, but their average $F$ is $\mmuF$-strongly convex, with $\mmuF\approx 0.3$. Every $F_m$ is $L$-smooth, with $L=\max_{m=1,\ldots,M} \|A_m^*A_m\|\approx 153$. We choose $b=1.4$ so that the 2 terms in the rate $c$ are equal and $\approx 0.9996$ and we set $\gamma=\frac{1}{L(1+b)^2}$ in the 3 algorithms. In L-SVRG and ELVIRA, $N=1$ and $p=\frac{1}{M}$. Then the Lyapunov function $\Psi^k$ is the same for the 3 algorithms, as well as the rate $c\approx 0.9996$. We show the upper bound $c^k\Psi^0$ in black in Figure~1. The solutions $x^\star$ and $h_m^\star$ were computed to machine precision by running SAGA with $10^6$ iterations. The value of $\Psi^k$ with respect to $k$ is shown in Figure~1 for the 3 algorithms, for 15 different runs of each algorithm. We can observe that the algorithms converge linearly, as proved by our convergence results, with an empirical convergence rate better than the upper bound. The 3 algorithms have rather similar convergence profiles, with convergence slightly slower for SAGA, ELVIRA performing best, with less choppy curves, and L-SVRG in between. The convergence is shown with respect to the iteration index $k$, but we should keep in mind that SAGA has 1 gradient evaluation per iteration, whereas in average L-SVRG and ELVIRA have 3. But SAGA needs to store all the vectors $h_m$, while L-SVRG and ELVIRA do not need such memory occupation.

\end{document}